\documentclass[11pt,a4paper]{article}

\usepackage{a4wide}
\usepackage{graphicx}
\usepackage{latexsym}
\usepackage{epsfig}
\usepackage{amssymb}
\usepackage{amstext}
\usepackage{amsgen}
\usepackage{amsxtra}
\usepackage{amsgen}
\usepackage{amsthm}

\usepackage{todonotes}

\usepackage{subfigure}

\def\eps{\varepsilon}

\def\d{\,{\rm d}}

\def\RR{\mathbb{R}}

\newtheorem{thm}{Theorem}[section]
\newtheorem{prop}[thm]{Proposition}
\newtheorem{lemma}[thm]{Lemma}
\newtheorem{cor}[thm]{Corollary}
\newtheorem{rem}[thm]{Remark}

\theoremstyle{definition}

\newtheorem{assumption}[thm]{Assumption}

\numberwithin{equation}{section}

\newcommand{\jump}[1]{[\![#1]\!]}
\newcommand{\avg}[1]{\{\!\{#1\}\!\}}

\begin{document}

\title{Flow Characteristics in a Crowded Transport Model }
\author{Martin Burger\thanks{Institut f\"ur Numerische und Angewandte Mathematik, Westf\"alische Wilhelms-Universit\"at (WWU) M\"unster. Einsteinstr. 62, D 48149 M\"unster, Germany. e-mail: martin.burger@wwu.de  } \and Jan-Frederik Pietschmann\thanks{Numerical Analysis and Scientific Computing, Department of Mathematics, TU Darmstadt, Dolivostr. 15, 64293 Darmstadt, Germany. e-mail: pietschmann@mathematik.tu-darmstadt.de  } }
\maketitle

\begin{abstract}
The aim of this paper is to discuss the appropriate modelling of in- and outflow boundary conditions for nonlinear drift-diffusion models for the transport of particles including size exclusion and their effect on the behaviour of solutions. We use a derivation from a microscopic asymmetric exclusion process and its extension to particles entering or leaving on the boundaries. This leads to specific Robin-type boundary conditions for inflow and outflow, respectively. For the stationary equation we prove the existence of solutions in a suitable setup. Moreover, we investigate the flow characteristics for small diffusion, which yields the occurence of a maximal current phase in addition to well-known one-sided boundary layer effects for linear drift-diffusion problems. In a one-dimensional setup we provide rigorous estimates in terms of $\epsilon$, which confirm three different phases. Finally, we derive a numerical approach to solve the problem also in multiple dimensions. This provides further insight and allows for the investigation of more complicated geometric setups. 

\end{abstract}

\section{Introduction}

Transport phenomena of crowded particles and their mathematical modelling have received considerable attention recently, driven by a variety of important applications in biology and social sciences, e.g. transport of ions and macromolecules through channels and nanopores (cf. \cite{Burger2012,Dreyer2013,Dreyer2014,eisenberg2013steric,horng2012pnp}), cargo transport by molecular motors on microtubuli (cf. \cite{ciandrini2014stepping,leduc2012molecular,reese2011crowding}), collective cell migration (cf. \cite{painterhillen,plank2012models,simpson2011models,dyson2014importance}), tumour growth (cf. \cite{jacksonbyrne,stelzer}) or dynamics of human pedestrians (cf. \cite{cividini2013crossing,jelic2012properties,schlakepietschmann}). Such applications naturally lead to questions related to boundary (or interface) conditions restricting in- or outflow of particles, and the resulting characteristics of flow. In ion channels the characteristics are relations between bath concentrations (boundary values) and current, in pedestrian motion one is interested in flow and evacuation properties depending on exit doors, and the movement of cargo between microtubuli respectively delivery to the desired site act as as similar boundary conditions. A variety of computational investigations of such issues have been performed, partly with additional complications such as electrostatic interactions, chemotaxis, or herding. Such simulations can give hints on the flow behaviour, but it becomes difficult to understand the causes and asymptotic regimes for certain effects. 
Therefore we investigate in detail a canonical simple model with in- and outflow boundary conditions in this paper. To this end, we assume that the boundary $\partial\Omega$ of our domain is subdivided into three parts: Inflow $\Gamma$, outflow $\Sigma$ and insulating $\partial\Omega\setminus (\Gamma \cup \Sigma)$ with $\Gamma \cap \Sigma = \emptyset$. Then for $x \in \Omega \subset \RR^n$, $t>0$ and $\rho=\rho(x,t)$  we consider the equation
\begin{align}\label{eq:basic}
 \partial_t \rho + \nabla \cdot j = 0,\quad j =  - D \nabla \rho + \rho (1-\rho) u,
\end{align}
with boundary conditions
\begin{align}\label{eq:bc1}
- j\cdot n &= \alpha(1-\rho),\quad\text{on }\Gamma,\\\label{eq:bc2}
  j\cdot n &= \beta\rho,\quad\text{on }\Sigma,\\
  j\cdot n &= 0,\quad\text{on }\partial\Omega\setminus (\Gamma \cup \Sigma)\label{eq:bc3},
\end{align}
where $u:\RR^n \to \RR^n$ is a given velocity field. The model we use is derived from the paradigmatic asymmetric exclusion process (cf. \cite{chou2011non}), with appropriate modifications to include realistic in- and outflow boundaries. In a simple one-dimensional setup, this model was investigated recently in \cite{Wood2009} including stochastic entrance and exit conditions, exhibiting three different phases of behaviour. We will take a continuum limit of that model and verify that these three phases are still present under the same conditions on parameters and demonstrate how the model generalizes to multiple dimensions and multiple species going in potentially different directions. The (formal) continuum limit naturally leads to the case of nonlinear convection dominating the diffusion, hence the limit of diffusion tending to zero is natural, and indeed the appearing boundary layers are separating the different phases. 

Let us mention that the study of continuum limits of microscopic particle models with size exclusion effects is a very timely research topic. The majority of the rigorous analysis is however carried out for closed systems, i.e. under no-flux boundary conditions or on the whole space, where such systems possess a gradient flow structure that can be well exploited either with transport metrics (cf. \cite{AGS,carrillo2010nonlinear,liero2013gradient}) or with entropy dissipation techniques (cf. \cite{carrillo2001entropy,Burger2010}). The case of non-closed systems has been studied at the continuum level mainly for Dirichlet boundary conditions, where at least the modelling is obvious. In the case of general inflow and outflow conditions the modelling of boundary conditions needs to be adapted to the specific approach for deriving continuum equations (cf. \cite{erban2007reactive}), which seems to have been ignored by most authors in the past. Moreover, the case of non-equilibrium boundary conditions poses additional challenges on the analysis, in particular existence proofs for stationary solutions cannot be carried out anymore by explicit computations or energy minimization arguments. Nonetheless, some arguments can still benefit from the underlying gradient flow structure in the energy, in particular a transformation to dual variables (also called entropy variables) is quite benefitial for existence proofs, since it yields maximum principles that do not hold for the original variables (cf. \cite{Burger2010,Burger2012,Juengel2014}). In this paper we will use similar ideas and extend them from Dirichlet to inflow and outflow boundary conditions. 

This paper is organized as follows: In Section \ref{sec:modelling} we present the model for several species and give more details about the nonlinear boundary conditions. In section \ref{sec:basic} we present existence proofs for a single species. We seperately treat the cases when the velocity field is either a given divergence free vector field or the gradient of a potential function. In Section \ref{sec:asymptotic} we investigate the behaviour for a small diffusion coefficient and compare this to the results presented in \cite{Wood2009}. Finally, in Section \ref{sec:numerics}, we introduce a discontinuous Galerkin scheme and present examples in one and two spatial dimensions.

\section{Modelling}\label{sec:modelling}

Crowding models based on (totally) asymmetric exclusion processes as well as their mean-field continuum limits have gained strong attention recently (cf. e.g. \cite{penington2011building,simpsonmulti} and the references above). The main paradigm is to model jumps of particles on a discrete lattice with jump probabilities consisting of unoriented parts (diffusion) and oriented drifts (transport). The exclusion is incorporated by avoiding jumps to an occupied cell. Using standard continuum limits (rescaling time and space to have grid sizes and typical waiting times converge to zero) as well as simple mean-field closure assumptions, which can also be made rigorous (cf. \cite{giacomin}), one obtains partial differential equations of the form 
\begin{equation}
	\partial_t \rho_i + \nabla \cdot (j_i) = 0, \quad  j_i = - D_i (\rho_0  \nabla \rho_i - \rho_i \nabla \rho_0) + \rho_i \rho_0 u_i, \qquad \text{in } \Omega \times (0,T), \label{basiceqn1} 
\end{equation}
where $x \in \Omega \subset \RR^n$, $t>0$, $\rho_i=\rho_i(x,t)$ is the density of the $i$-th species of particles with diffusion coefficient $D_i$ and velocity field  $u_i:\RR^n \to \RR^n$ $u_i$, $i=1,\ldots,M$. The free-space density $\rho_0$ is given by
\begin{equation}
	\rho_0 = 1- \sum_{i=1}^M \rho_i. 
\end{equation}
In the previously well-investigated case of a potential field $u_i = - D_i \nabla V_i$ for some $V_i:\RR^n \to \RR$ (cf. \cite{Burger2010}), the system can be recast in gradient form 
\begin{equation}
	\partial_t \rho_i = \nabla \cdot ( D_i \rho_0 \rho_i  \nabla (\partial_{\rho_i} E[\rho_1,\ldots,\rho_M]) ), 
\end{equation}
with the entropy functional 
\begin{equation}\label{eq:entropy}
	E[\rho_1,\ldots,\rho_M] = \int_\Omega \left( \sum_{i=1}^M ( \rho_i \log \rho_i - \rho_i V_i) + \rho_0 \log \rho_0 \right)~dx.
\end{equation}
The above differential equations have been studied in detail with potential fields and no-flux boundary conditions, when the system is indeed a gradient flow and stationary solutions can be characterised as minimisers of the entropy at fixed mass (cf. \cite{Burger2010}). In many practical applications different boundary conditions and non-zero flow is of fundamental importance however. In \cite{Burger2012} the case of mixed no-flux and Dirichlet boundary conditions has been studied in a model for charged particles coupled with the Poisson equation. Here we want to focus on in- and outflow boundaries, as recently also used in one-dimensional stochastic models \cite{Wood2009}.

\subsection{Inflow Boundary Conditions}

We assume that particles of the $i$-th species enter the domain $\Omega$ on a subregion $\Gamma_i \subset \partial \Omega$ with rate $\alpha_i > 0$. Without exclusion principle, this would simply mean in the continuum that the normal flux equals $\alpha_i$. Modelling volume exclusion in the discrete setting means that the particle can only enter a grid cell adjacent to the boundary if it is empty. Hence, the probability of entering is modified to $\alpha_i \rho_0$, and we deduce the boundary condition
\begin{equation}\label{eq:bc_in}
	- j_i \cdot n = \alpha_i \rho_0 \qquad \text{on } \Gamma_i.
\end{equation}
Note the negative sign in front of the normal flux since we use the convention of a normal oriented outward. 
The boundary condition can be rewritten as 
$$
D_i \rho_0 \rho_i \nabla \left( \log \frac{\rho_i}{\rho_0} \right) \cdot n =  \rho_0 ( \alpha_i +  \rho_i u_i \cdot n), 
$$
which clarifies the role of the normal velocity at the inflow boundary. The inflow rate can balance the normal velocity only if $u_i \cdot n \leq 0$. On the other hand we will have $\rho_i \leq 1$ and thus, balancing only for $\alpha_i \leq 1$. 

\subsection{Outflow Boundary Conditions}

Outflow boundaries are more straightforward to model, we assume that particles of the $i$-th species leave the domain $\Omega$ on a subregion $\Sigma_i \subset \partial \Omega \setminus \Gamma_i$ with rate $\beta_i$. Thus, 
\begin{equation}\label{eq:bc_out}
	j_i \cdot n = \beta_i \rho_i \qquad \text{on } \Sigma_i. 
\end{equation}
Again we can rewrite the boundary condition in the form
$$
- D_i \rho_0 \rho_i \nabla \left( \log \frac{\rho_i}{\rho_0} \right) \cdot n =  \rho_i ( \beta_i -  \rho_0 u_i \cdot n), 
$$
hence $u_i \cdot n \geq 0$ is needed for balancing. 
Note also that we could easily include no-flux boundaries by simply setting $\beta_i = 0$, but we rather shall consider them explicitly, i.e. we have
\begin{align}\label{eq:bc_no}
 j_i \cdot n = 0\quad \text{on }\; \partial\Omega \setminus \left(\bigcup_{i=1}^M\Gamma_i \cup \bigcup_{i=1}^M \Sigma_i\right). 
\end{align}

\section{Basic Properties}\label{sec:basic}
As detailed in the introduction, we shall from now on restrict our analysis to the case of a single active species ($M=1$). If we further assume that the diffusion coefficient $D$ is normalized to $1$, equation \eqref{basiceqn1} considerably simplifies to
\begin{equation*}
	\partial_t \rho + \nabla \cdot( - \nabla \rho + \rho (1-\rho) u) = 0,
\end{equation*}
with $\rho$ denoting the density of a single species and supplemented with boundary conditions \eqref{eq:bc1}--\eqref{eq:bc3}. We mention that with similar arguments as in \cite{jacksonbyrne,stelzer}, the case  $M=1$ can also be derived from standard continuum fluid mechanical models adding a congestion constraint $\rho_0 = 1 -  \rho$. 
%
Due to the boundary conditions there is obviously no mass conservation in the system. However, there is still a natural balance condition between in- and outflow, i.e., if $\rho$ solves \eqref{eq:basic} then
\begin{equation}
\partial_t \int_\Omega \rho~dx = \int_{\Sigma} \beta \rho ~d\sigma - \int_{\Gamma} \alpha \rho_0~d\sigma,
\end{equation}
where $\alpha$ and $\beta$ denote the in- and outflow rate for $\rho$, respectively. In the stationary case the two integrals need to balance, which implies an interesting coupling in the balance conditions (via $\rho_0$) if $M > 1$. Note also that in an evacuation case, i.e. $\Sigma = \emptyset$, the mass in the system is monotonically decreasing as it is expected. Finally, we state the following assumptions for later use
\begin{assumption}\label{ass:prelim}

(A1) $\Omega \subset \RR^n$, $n=1,2,3$ with boundary $\partial\Omega$ of class $C^2$. 

(A1') In addition to (A1), let $\Gamma$ and $\Sigma$ be such that a weak solution $w$ of the Poisson equation with right-hand side in $L^2(\Omega)$ and Neumann data $\partial_n w = f$ satisfies $w \in H^2(\Omega)$ for any $f$ such that
$$ f|_\Sigma \in H^{1/2}(\Sigma), \quad f|_\Gamma \in H^{1/2}(\Gamma), \quad f|_{\partial \Omega \setminus (\Gamma \cup \Sigma)} \equiv 0. $$  

(A2) $0\le \alpha \le 1$ and $0 \le \beta \le 1$.

(A3) $u\in [W^{1,\infty}(\Omega)]^n$ such that $\nabla\cdot u = 0$, $u\cdot n = -1$ on $\Gamma$, $u\cdot  n = 1$ on $\Sigma$, and $u\cdot  n = 0$ on $\partial \Omega \setminus( \Gamma \cup \Sigma)$.

(A3') $u=\nabla V$, $V\in H^1(\Omega)$.
\end{assumption}

\subsection{Existence of Stationary Solutions}
We shall present two different proofs, one for a given velocity vector field $u$ and a second one where $u = \nabla V$ for some potential $V$, in which case we can employ a transformation to so-called "entropy variables". 
\begin{thm}[incompressible case]\label{thm:incompressible} Let the assumptions (A1), (A1'), (A2) and (A3) hold. Then, the equation
\begin{align*}
 \nabla\cdot (-\nabla \rho + \rho(1-\rho)u) = 0,
\end{align*}
supplemented with the boundary conditions \eqref{eq:bc1}--\eqref{eq:bc3} has at least one solution $u \in H^1(\Omega) \cap L^\infty(\Omega)$ such that  
\begin{equation}
	 \min\{\alpha,1-\beta\} \le \rho(x) \le \max \{ \alpha, 1-\beta \} \label{rhobounds}
\end{equation}
\end{thm}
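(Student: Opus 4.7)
My plan is to prove existence by Schauder's fixed-point theorem on a convex subset of $L^2(\Omega)$ that encodes the pointwise bounds \eqref{rhobounds}, combined with a sub-/supersolution argument showing that the constants $m := \min\{\alpha,1-\beta\}$ and $M := \max\{\alpha,1-\beta\}$ bound the solution from below and above.

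The crucial preliminary observation is that $m$ and $M$ are themselves sub- and supersolutions of the nonlinear problem. Substituting $\rho \equiv M$ into the weak formulation of \eqref{eq:basic}--\eqref{eq:bc3}, the gradient and insulating contributions vanish, and using $\nabla\cdot u = 0$ together with the prescribed normal traces $u\cdot n = \mp 1$ on $\Gamma, \Sigma$ from (A3), the remainder collapses to
$$ \int_\Gamma (M-\alpha)(1-M)\phi\,d\sigma + \int_\Sigma M(M+\beta-1)\phi\,d\sigma, $$
which is nonnegative for every $\phi \ge 0$ by the definition of $M$ and (A2); hence $M$ is a supersolution, and the analogous computation identifies $m$ as a subsolution.

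For $\tilde\rho$ in $K := \{\rho \in L^2(\Omega) : m \le \rho \le M\}$, I define $\rho = T(\tilde\rho)$ as the unique weak solution in $H^1(\Omega)$ of the linear Robin problem
$$ \int_\Omega \nabla\rho\cdot\nabla\phi\,dx + \int_\Gamma \alpha\rho\phi\,d\sigma + \int_\Sigma \beta\rho\phi\,d\sigma = \int_\Omega \tilde\rho(1-\tilde\rho) u\cdot\nabla\phi\,dx + \int_\Gamma \alpha\phi\,d\sigma, $$
obtained by freezing the drift at $\tilde\rho$ while keeping the algebraic Robin terms on the left. The bilinear form is bounded and coercive on $H^1(\Omega)$ via a trace-based Poincar\'e inequality (assuming positivity of $\alpha$ or $\beta$), so Lax--Milgram provides a unique solution; (A1') then upgrades $\rho$ into $H^2(\Omega)\hookrightarrow L^\infty(\Omega)$, yielding compactness of $T$ in $L^2$ via Rellich and continuity from $H^1$-stability of the linear problem. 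Invariance $T(K)\subseteq K$ is obtained by comparing $\rho$ with the supersolution $M$: testing the difference with $(\rho-M)_+$ and exploiting the identity $\rho(1-\rho) - M(1-M) = (\rho-M)(1-\rho-M)$ together with the incompressibility of $u$ and the inequalities $\alpha \le M$, $1-\beta \le M$, $\tilde\rho \le M$ forces $(\rho-M)_+ \equiv 0$; the lower bound is analogous. Schauder's theorem then produces a fixed point $\rho \in K \cap H^1(\Omega)\cap L^\infty(\Omega)$ solving the original problem with the claimed bounds.

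The main obstacle I anticipate is the invariance step. Because $s(1-s)$ is not monotone on $[0,1]$, classical monotone iteration for sub-/supersolutions does not apply directly, and one must track sign cancellations between the drift-induced boundary terms (arising when one integrates $\tilde\rho(1-\tilde\rho) u\cdot\nabla(\rho-M)_+$ by parts) and the Robin contributions, so that the only inequalities invoked are precisely those defining $M$. A secondary technical issue is coercivity in the degenerate limit $\alpha \to 0$ or $\beta \to 0$, which can be handled by an $\epsilon$-regularisation and passage to the limit.
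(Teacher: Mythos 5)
Your overall architecture (Schauder on a convex set encoding the bounds $m=\min\{\alpha,1-\beta\}$, $M=\max\{\alpha,1-\beta\}$, plus $H^2$-regularity for compactness) matches the paper, and your observation that $m$ and $M$ are sub-/supersolutions of the nonlinear problem is correct. However, the invariance step $T(K)\subseteq K$, which you yourself flag as the main obstacle, does not go through with your choice of linearization, and this is a genuine gap rather than a technicality.

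The problem is that you freeze the \emph{entire} drift at $\tilde\rho$: your linear problem is $-\Delta\rho = -\nabla\cdot(\tilde\rho(1-\tilde\rho)u)$ with Robin data. Subtracting the supersolution inequality for $M$ and testing with $(\rho-M)_+$ gives
\begin{equation*}
\int_\Omega |\nabla(\rho-M)_+|^2\,dx + \int_\Gamma \alpha (\rho-M)_+^2\,d\sigma + \int_\Sigma \beta (\rho-M)_+^2\,d\sigma \;\le\; \int_\Omega \bigl(\tilde\rho(1-\tilde\rho)-M(1-M)\bigr)\,u\cdot\nabla(\rho-M)_+\,dx .
\end{equation*}
The identity you invoke, $\rho(1-\rho)-M(1-M)=(\rho-M)(1-\rho-M)$, is the one you would need — but it is not available, because the drift in your linearized equation carries $\tilde\rho$, not $\rho$. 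What actually appears is $(\tilde\rho-M)(1-\tilde\rho-M)$, which does not vanish on the set $\{\rho\le M\}$, is not supported where $(\rho-M)_+$ lives, and multiplies $u\cdot\nabla(\rho-M)_+$, which has no sign. The best you can extract is $\|\nabla(\rho-M)_+\|_{L^2}\le C\|\tilde\rho-M\|_{L^2}$, which does not force $(\rho-M)_+\equiv 0$. Concretely, for oscillatory $\tilde\rho\in K$ the source $\nabla\cdot(\tilde\rho(1-\tilde\rho)u)=(1-2\tilde\rho)\,u\cdot\nabla\tilde\rho$ is large, and the solution of the Poisson–Robin problem will overshoot $M$; there is no comparison principle with constants for this operator.

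The paper avoids this by linearizing differently: it expands $\nabla\cdot(\rho(1-\rho)u)=(1-2\rho)\,u\cdot\nabla\rho$ (using $\nabla\cdot u=0$) and freezes only the coefficient, solving $-\Delta\rho+(1-2\bar\rho)\,u\cdot\nabla\rho=0$ with correspondingly linearized boundary conditions. This is a non-divergence-form elliptic operator with no zeroth-order term, so the classical maximum principle and Hopf's lemma apply: the extrema of $\rho$ sit on $\Gamma\cup\Sigma$, and there the sign of $\nabla\rho\cdot n$ combined with $u\cdot n=\mp 1$ and the linearized boundary conditions yields exactly $\rho\le\alpha$ or $\rho\le 1-\beta$ (and the mirror bounds from below). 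If you want to keep a drift frozen at $\tilde\rho$, you must keep $\nabla\rho$ (not $\nabla\tilde\rho$) inside it so that the comparison with constants survives; otherwise the self-mapping property fails and Schauder cannot be applied on your set $K$. Your secondary concern about coercivity when $\alpha=\beta=0$ is legitimate but minor by comparison.
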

\begin{proof}
 The proof is based on Schauder's fixed-point theorem. We define the set
 \begin{align*}
  \mathcal{M} = \left\{ \rho \in L^\infty(\Omega)\cap H^1(\Omega)\;|\;\min\{\alpha,1-\beta\} \le \rho \le \max \{ \alpha, 1-\beta \} , \int_\Omega |\nabla \rho|^2dx\leq C \right\}
\end{align*}
with $C= \Vert u\Vert_\infty^2 + 2 \vert \partial \Omega \vert$.
For given $\bar \rho \in \mathcal{M}$, we define the operator $S:\mathcal{M}\rightarrow H^2(\Omega)$ that maps $\bar\rho$ to  the solution of the linearized problem
\begin{align}\label{eq:lin}
 - \nabla\cdot \nabla \rho + (1-2 \bar \rho) \nabla \rho \cdot u = 0,
\end{align}
supplemented with the boundary conditions
\begin{align}\label{eq:lin_bc1}
 \nabla\rho\cdot n &= (\alpha - \rho )(1-\bar\rho),\quad\text{on }\Gamma,\\
 -\nabla\rho\cdot n &= (\beta - (1-\rho) )\bar \rho,\quad\text{on }\Sigma,\\
 \nabla\rho\cdot n &= 0,\quad\text{on }\partial\Omega\setminus (\Gamma \cup \Sigma)\label{eq:lin_bc3}.
\end{align}
Note that we linearized the boundary conditions differently on $\Gamma$ and $\Sigma$, this will be crucial later on. Standard theory for linear elliptic equations (and our assumption on the regularity of the boundary), cf. \cite{Grisvard1985,Ladyzhenskaya1968}, ensures a maximum principle and existence of a weak solution,  subsequently the existence of a solution $\rho\in H^2(\Omega)$ since the prerequisites of (A1') are satisfied. In order to apply Schauder's fixed-point theorem, we have to prove that $S$ is self-mapping from $\mathcal{M}$ to $\mathcal{M}$, continuous and compact.

{\bf Self-mapping:}
Equation \eqref{eq:lin} satisfies a maximum principle with vanishing normal derivative on $\partial \Omega \setminus (\Gamma \cup \Sigma)$, and thus (by Hopf's maximum principle) $\rho$ attains its maximum on $\Gamma \cup \Sigma$. We have to distinguish the following cases: 
\begin{itemize}
 \item $\rho$ attains its maximum on $\Gamma$ and thus $\nabla\rho\cdot n\ge 0$. Since by assumption $(1-\bar\rho) \ge 0$, this implies $\alpha + \rho u\cdot  n \ge 0$. As $u\cdot  n  = - 1 $ on $\Gamma$ we conclude $\rho \le \alpha$.
 \item If $\rho$ attains its maximum on $\Sigma$ we conclude, since $u\cdot n = 1$, $\rho\le 1-\beta$.
\end{itemize}
If $\rho$ attains its minimum on the boundary we use the same arguments to conclude $\alpha \le \rho$ and $ 1-\beta \le \rho$. Finally, the $L^2$-bound on $\nabla \rho$ follows by using the weak formulation with test function $\rho$ and applying the bounds $0\leq \rho \leq 1$ and $0 \leq \bar \rho \leq 1$.

{\bf Continuity:} To show continuity of $S$ we take a sequence $\bar\rho_k$ in $L^\infty(\Omega)\cap H^1(\Omega)$ such that $\bar\rho_k \rightarrow \bar\rho$. We have to show that the sequence $\rho_k = S(\bar\rho_k)$ converges to some $\rho$ and that $\rho = S(\bar\rho)$. Since $\rho_k \in H^2(\Omega)$ we know that there exists $\tilde\rho$ such a subsequence that $\bar\rho_{k_j} \rightharpoonup \tilde{\rho}$ weakly in $H^2(\Omega)$.
Thus
\begin{align*}
 \int_\Omega \nabla \rho_{k_j}\cdot \nabla\phi\d x - \int_\Omega \rho(1-\rho_{k_j})u\cdot \nabla \phi\d x \rightarrow \int_\Omega \nabla \tilde{\rho}\cdot\nabla\phi\d x - \int_\Omega \rho(1-\tilde\rho)u\cdot \nabla \phi\d x,
\end{align*}
i.e. $\tilde\rho$ solves \eqref{eq:lin}. The continuity of the trace operator allows us to pass to the limit in the boundary conditions \eqref{eq:lin_bc1}--\eqref{eq:lin_bc3} as well. The maximum principle discussed above implies that this solution is unique and the uniqueness of limits therefore yields $\tilde\rho = \rho$.

{\bf Compactness:} The compactness of the operator $S$ follows from the fact that the embedding $H^2(\Omega)\hookrightarrow L^\infty(\Omega)\cap H^1(\Omega)$ is compact for $n\le 3$. This completes the proof.
\end{proof}
Next we treat the potential case $u = \nabla V$, where we obtain the following
\begin{thm}[potential case]\label{thm:M1_potential} Let the assumptions (A1), (A2) and (A3') hold. Then, the equation
\begin{align}\label{eq:M1_potential}
 \nabla\cdot (-\nabla\rho + \rho(1-\rho)\nabla V) = 0,\quad x\in\Omega,
\end{align}
supplemented with the boundary conditions \eqref{eq:bc1}--\eqref{eq:bc3} has at least one solution $u \in H^1(\Omega) \cap L^\infty(\Omega)$ such that $0 \le \rho \le 1$.
\end{thm}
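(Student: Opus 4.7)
The plan is to adapt the Schauder fixed-point strategy of Theorem \ref{thm:incompressible} but to work in the entropy (dual) variable, which automatically encodes the pointwise bound $0<\rho<1$ and compensates for the lack of a maximum principle caused by the possibly nonzero $\Delta V$. Define the logistic function $\sigma(s):=e^s/(1+e^s)$ and the entropy variable $w:=\log(\rho/(1-\rho))-V$, so that $\rho=\sigma(w+V)\in(0,1)$ and the pointwise identity $\rho(1-\rho)\nabla w=\nabla\rho-\rho(1-\rho)\nabla V$ holds. Consequently \eqref{eq:M1_potential} is equivalent to $-\nabla\cdot(\rho(1-\rho)\nabla w)=0$, and the boundary conditions \eqref{eq:bc1}--\eqref{eq:bc3} rewrite as $\rho(1-\rho)\nabla w\cdot n=\alpha(1-\rho)$ on $\Gamma$, $\rho(1-\rho)\nabla w\cdot n=-\beta\rho$ on $\Sigma$, and $\nabla w\cdot n=0$ on the remainder.

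For $\bar w\in L^2(\Omega)$ and $\bar\rho:=\sigma(\bar w+V)\in(0,1)$, I would then introduce the regularized linear problem
$$\varepsilon w-\nabla\cdot\bigl((\bar\rho(1-\bar\rho)+\delta)\nabla w\bigr)=0\quad\text{in }\Omega,$$
with the corresponding Robin-type boundary conditions obtained by substituting $\bar\rho$ for $\rho$ on the right-hand sides. The parameter $\delta>0$ removes the degeneracy of the principal part, while $\varepsilon>0$ restores coercivity in the absence of a (nonlinear) Neumann compatibility condition; Lax--Milgram thus supplies a unique $w\in H^1(\Omega)$. Since $\|\bar\rho\|_\infty\leq 1$, the image lies in a fixed ball in $H^1(\Omega)$ of radius $C(\delta,\varepsilon)$ independent of $\bar w$, and the resulting map $T_{\delta,\varepsilon}:\bar w\mapsto w$ is compact in $L^2(\Omega)$ by the Rellich--Kondrachov theorem. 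Schauder's theorem then yields a fixed point $w_{\delta,\varepsilon}$ and a self-consistent $\rho_{\delta,\varepsilon}=\sigma(w_{\delta,\varepsilon}+V)\in(0,1)$.

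The final step is to pass to the limit $\delta,\varepsilon\to 0$. Testing the equation rewritten in $\rho$-form against $\phi=\rho_{\delta,\varepsilon}$ and using $0\leq\rho_{\delta,\varepsilon}\leq 1$, $V\in H^1(\Omega)$, and Young's inequality gives a uniform $H^1(\Omega)$-bound on $\rho_{\delta,\varepsilon}$. The Rellich--Kondrachov theorem yields a subsequence with $\rho_{\delta,\varepsilon}\rightharpoonup\rho$ weakly in $H^1$ and strongly in $L^2$ (hence a.e.), so the bound $0\leq\rho\leq 1$ survives in the limit. The quadratic nonlinearity $\rho_{\delta,\varepsilon}(1-\rho_{\delta,\varepsilon})\nabla V$ then converges by dominated convergence and the boundary traces by continuity of the trace operator.

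The main obstacle will be showing that the two regularizing contributions vanish in the limiting weak formulation. Taking $\phi\equiv 1$ in the regularized weak form at the fixed point yields the identity
$$\varepsilon\int_\Omega w_{\delta,\varepsilon}\,dx=\int_\Gamma\alpha(1-\rho_{\delta,\varepsilon})\,d\sigma-\int_\Sigma\beta\rho_{\delta,\varepsilon}\,d\sigma,$$
which is a priori only $O(1)$ as $\varepsilon\to 0$. The consistency of the limit therefore forces the natural stationary mass balance $\int_\Gamma\alpha(1-\rho)\,d\sigma=\int_\Sigma\beta\rho\,d\sigma$ to emerge from the scheme, and this requires showing that $\varepsilon\int_\Omega w_{\delta,\varepsilon}\phi\,dx\to 0$ for smooth $\phi$; I expect this to be handled by a careful scaling $\varepsilon\ll\delta$ together with the uniform control of $\delta^{1/2}\nabla w_{\delta,\varepsilon}$ extracted from the identity $\rho(1-\rho)\nabla w=\nabla\rho-\rho(1-\rho)\nabla V$ and the $H^1$-bound on $\rho_{\delta,\varepsilon}$.
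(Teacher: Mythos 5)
Your architecture---passing to the entropy variable $w=\log(\rho/(1-\rho))-V$, regularizing, linearizing by freezing the coefficient, applying Schauder, and then removing the regularization---is the same as the paper's, and your Lax--Milgram treatment of the frozen problem (with the boundary data also frozen at $\bar\rho$, so that the auxiliary problem is fully linear) is a legitimate and arguably simpler alternative to the paper's route, which keeps the boundary terms nonlinear in $\psi$ and solves a convex minimization problem. The fixed-point step is therefore fine. The genuine gap is exactly at the point you yourself flag as ``the main obstacle'': making the regularization terms $\delta\int_\Omega\nabla w\cdot\nabla\varphi\,dx$ and $\varepsilon\int_\Omega w\varphi\,dx$ vanish in the limit. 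Your proposed source of control does not exist. Testing with $\varphi=\rho_{\delta,\varepsilon}$ gives the uniform $H^1$ bound on $\rho_{\delta,\varepsilon}$, but the identity $\nabla w=(\rho(1-\rho))^{-1}\nabla\rho-\nabla V$ cannot be combined with $\|\nabla\rho\|_{L^2}\le C$ to bound $\delta^{1/2}\|\nabla w\|_{L^2}$, because the factor $(\rho(1-\rho))^{-1}$ is unbounded wherever $\rho$ approaches $0$ or $1$ --- and the theorem only guarantees $0\le\rho\le1$, with the boundary layers of Section \ref{sec:asymptotic} showing that $\rho$ genuinely does approach these values. What testing with $\rho$ actually yields for the regularization is only the degenerate bound $\delta\int_\Omega\rho(1-\rho)|\nabla w|^2\,dx\le C$, which is too weak. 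Likewise, no scaling $\varepsilon\ll\delta$ rescues $\varepsilon\int_\Omega w\varphi\,dx$ without an a priori bound on $\|w\|_{L^2}$ of order $o(\varepsilon^{-1})$, which you never establish.

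The fix, and the heart of the paper's proof, is to take the entropy variable itself as the test function, $\varphi=\psi^\delta$, in the weak form \eqref{eq:rho_delta_weak}. This produces the regularization terms with a favourable sign, $\delta\int_\Omega\bigl(|\nabla\psi^\delta|^2+|\psi^\delta|^2\bigr)dx$, so that one obtains the uniform bound $\delta\|\psi^\delta\|_{H^1}^2\le C$ and hence $|\delta\int_\Omega\nabla\psi^\delta\cdot\nabla\varphi\,dx|\le C\delta^{1/2}\|\varphi\|_{H^1}\to0$ (and similarly for the zeroth-order term). The price is that the boundary integrals $-\alpha\int_\Gamma(1-\rho^\delta)\psi^\delta\,d\sigma$ and $\beta\int_\Sigma\rho^\delta\psi^\delta\,d\sigma$ now involve $\psi^\delta$, which is unbounded; the paper controls them by exhibiting their Kullback--Leibler structure, e.g.
\begin{align*}
-(1-\rho^\delta)\psi^\delta=\Bigl((1-\rho^\delta)\log\tfrac{1-\rho^\delta}{\rho^\delta}+2\rho^\delta-1\Bigr)+\Bigl(-(1-\rho^\delta)V+1-2\rho^\delta\Bigr),
\end{align*}
where the first bracket is nonnegative and the second is bounded via the trace theorem for $V$. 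This structural observation is essential and is absent from your proposal; without it (or an equivalent device) the limit $\delta,\varepsilon\to0$ does not go through.
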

\begin{proof}
Our proof is based on an approximation procedure, applied to the equation in \emph{entropy variables} which are defined as the variation of the entropy functional with respect to the density $\rho$. Since we are in the case $M=1$, the entropy functional \eqref{eq:entropy} reduces to
\begin{align*}
 E[\rho] = \int_\Omega \rho\ln(\rho) - \rho V + (1-\rho)\ln (1-\rho).
\end{align*}
Then, we introduce the entropy variable as
\begin{align*}
 \psi := \partial_{\rho} E[\rho] = \log \rho - \log \rho_0 - V.
\end{align*}
Using elementary calculations, we can express the original density $\rho$ as
\begin{align*}
 \rho = \frac{e^{\psi+V}}{1+e^{\psi+V}},\;\text{and}\;\rho_0 = \frac{1}{1+e^{\psi+V}}.
\end{align*}
Applying this transformation to \eqref{eq:M1_potential}, \eqref{eq:bc1}--\eqref{eq:bc3} yields the nonlinear equation
\begin{align}\label{eq:trans_entropy}
 - \nabla\cdot\left(\frac{e^{\psi+V}}{\left(1+e^{\psi+V}\right)^2}\nabla \psi \right) = 0,
\end{align}
supplemented with the boundary conditions
\begin{align}\label{eq:bc_in_ent_1}
\frac{e^{ \psi+V}}{(1+e^{ \psi+V})^2}\nabla \psi\cdot  n &= \alpha \frac{1}{1+e^{ \psi+V}},\quad\text{on }\Gamma,\\\label{eq:bc_out_ent_1}
-\frac{e^{ \psi+V}}{(1+e^{ \psi+V})^2}\nabla \psi\cdot  n &= \beta\frac{e^{ \psi+V}}{1+e^{ \psi+V}},\quad\text{on }\Sigma,\\\label{eq:bc_no_ent_1}
\frac{e^{ \psi+V}}{(1+e^{ \psi+V})^2}\nabla \psi\cdot  n &= 0,\quad\text{on }\partial\Omega\setminus (\Gamma \cup \Sigma).
\end{align}
We will now apply an approximation procedure to this equation and proceed in several steps.

{\bf Existence for an auxiliary problem}: To simplify our notation we introduce the function $A(\psi,V) := \frac{e^{\psi+V}}{\left(1+e^{\psi+V}\right)^2}$ and for $\delta > 0$ we consider the problem
\begin{align}\label{eq:approx1}
 -\nabla\cdot( A(\psi^\delta,V) \nabla\psi^\delta) + \delta\psi^\delta = 0.
\end{align}
To prove existence of \eqref{eq:approx1}, we use a fixed-point argument. For given $\tilde\psi \in L^2(\Omega)$ we define $ \tilde A_\delta(x) = A(\tilde\psi(x),V(x)) + \delta$ which yields the linear equation
\begin{align}\label{eq:ent_aux}
 -\nabla\cdot( \tilde A_\delta \nabla\tilde{\psi}^\delta) + \delta\tilde{\psi}^\delta = 0,
\end{align}
subject to the nonlinear boundary conditions
\begin{align}\label{eq:ent_aux_bc}
 \tilde A_\delta\nabla\tilde{\psi}^\delta\cdot n = \left\{\begin{array}{ll}
                              \alpha\frac{1}{1+e^{\tilde{\psi}^\delta+V}}, &\text{on }\Gamma,\\
                              -\beta\frac{e^{\tilde{\psi}^\delta+V}}{1+e^{\tilde{\psi}^\delta+V}}, &\text{on }\Sigma,\\
                              0, & \text{on }\partial\Omega\setminus (\Gamma \cup \Sigma).
                             \end{array}\right.
\end{align}
The corresponding weak formulation, for $\varphi \in H^1(\Omega)$, is given by
\begin{align}\label{eq:weak_linear}
 0=\int_\Omega \left( \tilde A_\delta \nabla\tilde{\psi}^\delta\cdot\nabla\varphi + \delta \tilde{\psi}^\delta\varphi\right)\d x - \alpha\int_\Gamma \frac{1}{1+e^{\tilde{\psi}^\delta+V}}\varphi\d \sigma + \beta\int_\Sigma\frac{e^{\tilde{\psi}^\delta+V}}{1+e^{\tilde{\psi}^\delta+V}}\varphi\d\sigma.
\end{align}
This is the Euler-Lagrange equation to the nonlinear minimisation problem for the energy functional
\begin{align*}
 E(\tilde{\psi}^\delta) = \frac12 \int_\Omega \left(\tilde A_\delta|\nabla\tilde{\psi}^\delta|^2 + \delta |\tilde{\psi}^\delta|^2\right)\d x - \alpha\int_\Gamma F(\tilde{\psi}^\delta,V)\d\sigma + \beta\int_\Sigma G(\tilde{\psi}^\delta,V)\d\sigma,
\end{align*}
where $F$ and $G$ are chosen such that 
\begin{align*}
 \partial_\psi F(\psi,V) = \frac{1}{1+e^{\psi+V}},\quad \partial_\psi G(\psi,V) = \frac{e^{\psi+V}}{1+e^{\psi+V}}.
\end{align*}
Note that $F$, $G$ are convex, since their second derivatives are non-negative. Furthermore, due to 
\begin{align*}
 \tilde A_\delta(x) = \frac{e^{\tilde{\psi} + V}}{(1+e^{\tilde{\psi} + V})^2} + \delta =\frac{1}{2(1+\cosh(\tilde{\psi} + V))} + \delta,
\end{align*}
we have that $\tilde A_\delta(x) \in L^\infty(\Omega)$, uniformly with $\delta \le \tilde A_\delta(x) \le \delta + 1/4$. Thus $E(\tilde{\psi}^\delta)$ is coercive with respect to the $H^1$ norm and due to its convexity we conclude (cf. \cite[Section 8.2, theorems 2 and 3]{Evans98}) the existence of a unique minimiser $\tilde{\psi}^\delta \in H^1(\Omega)$, which is by definition a weak solution to \eqref{eq:ent_aux}. Furthermore, since $G(\tilde{\psi}^\delta,V) \ge 0$ and $F(\tilde{\psi}^\delta,V) < \infty$, we infer the $L^2$ a-priori estimate  
\begin{align}\label{eq:schauder_apriori}
 \int_\Omega |\tilde{\psi}^\delta|^2 \d x \le \tilde C + \alpha\int_\Gamma F(\tilde{\psi}^\delta,V)\d\sigma \le C_{\mathcal{M}}.
\end{align}
Here the constant $C_{\mathcal{M}}$ depends on the geometry, $\alpha$, $\beta$, and $\delta$. This result allows us to define the nonlinear operator $\tilde K:L^2(\Omega)\to H^1(\Omega)$ mapping $\tilde\psi$ to the solution of \eqref{eq:ent_aux}--\eqref{eq:ent_aux_bc}. Our aim is to apply Schauder's fixed point theorem in the set 
\begin{align*}
 \mathcal{M} := \{ \psi \in L^2(\Omega)\,|\, \|\psi\|_{L^2(\Omega)}^2 \le C_{\mathcal M} \}.
\end{align*}
To this end we denote by $I_{H^1(\Omega)\hookrightarrow L^2(\Omega)}$ the compact embedding of $H^1$ into $L^2$ and define the operator $K:\mathcal M \to \mathcal M$ by $K = I_{H^1(\Omega)\hookrightarrow L^2(\Omega)}\circ \tilde K$. Since the a-priori estimate \eqref{eq:schauder_apriori} implies that $K$ is self-mapping, it remains to show its continuity. We consider a sequence $\tilde\psi_n$ that converges to $\tilde \psi$ in $L^2(\Omega)$. This yields a sequence $\tilde{\psi}_n^\delta \in H^1(\Omega)$ having a weak limit. Since $\tilde A_\delta$ is uniformly bounded in $L^\infty(\Omega)$, an application of Lebesgues Theorem yields $\tilde A_\delta(\tilde \psi_n) \to \tilde A_\delta(\tilde \psi)$ in $L^2(\Omega)$ and thus we can pass to the limit in the first integral of the weak formulation \eqref{eq:weak_linear}, i.e.
\begin{align*}
 \int_\Omega \tilde A_\delta(\tilde\psi_n) \nabla \tilde \psi^\delta_n \cdot\nabla\varphi\d x \to  \int_\Omega \tilde A_\delta(\tilde \psi) \nabla\tilde \psi^\delta \cdot\nabla\varphi\d x.
\end{align*}
Uniqueness of the weak solution to \eqref{eq:weak_linear} (due to the convexity of the Energy $E$) thus implies $K(\tilde \psi_n) \to K(\tilde\psi)$ in $L^2(\Omega)$. Thus Schauder's fixed point theorem yields the existence of a solution $\psi^\delta$ to the auxiliary problem \eqref{eq:ent_aux}--\eqref{eq:ent_aux_bc}. 

{\bf Limit $\mathbf{\delta \to 0}$}: To this end, we define
\begin{align*}
 \rho^\delta = \frac{e^{\psi^\delta+V}}{1+e^{\psi^\delta+V}}. 
\end{align*}
Then $\rho^\delta\in H^1(\Omega)$ and satisfies the equation
\begin{align}\label{eq:ent_approx}
 \nabla\cdot\left(-\nabla\rho^\delta + \rho^\delta(1-\rho^\delta)\nabla V \right) - \delta\Delta \psi^\delta + \delta\psi^\delta = 0,
\end{align}
with the boundary conditions
\begin{align}
 \left(-\nabla\rho^\delta + \rho^\delta(1-\rho^\delta)\nabla V \right)\cdot  n = \left\{\begin{array}{ll}
                              -\alpha(1-\rho^\delta), &\text{on }\Gamma,\\
                              \beta\rho^\delta, &\text{on }\Sigma,\\
                              0, & \text{on }\partial\Omega\setminus (\Gamma \cup \Sigma).
                             \end{array}\right.
\end{align}
Again, we consider the weak form given by 
\begin{align}\label{eq:rho_delta_weak}
0&= \int_\Omega \left(\nabla\rho^\delta - \rho^\delta(1-\rho^\delta)\nabla V \right)\cdot\nabla\varphi +\delta\nabla\psi^\delta\cdot\nabla\varphi+ \delta\psi^\delta\varphi\d x \\
& - \alpha\int_\Gamma (1-\rho^\delta)\varphi\d\sigma + \beta\int_\Sigma \rho^\delta\varphi\d\sigma, \quad \varphi \in H^1(\Omega).
\end{align}
Our aim is to derive a-priori estimates on $\rho^\delta$ by choosing the test function $\varphi=\psi^\delta$. We have
\begin{align}\label{eq:estimate}
 0&=\int_\Omega (\nabla\rho^\delta\cdot \nabla\psi^\delta - \rho^\delta (1-\rho^\delta)\nabla V\cdot \nabla \psi^\delta)\d x + \delta\int_\Omega |\nabla\psi^\delta|^2 + (\psi^\delta)^2\d x\\\nonumber
 &- \alpha\int_\Gamma (1-\rho^\delta)\psi^\delta\d\sigma + \beta\int_\Sigma \rho^\delta\psi^\delta\d\sigma.
\end{align}
We estimate each term separately, noting that $\nabla\psi^\delta = \frac{1}{\rho^\delta(1-\rho^\delta)}\nabla\rho^\delta - \nabla V$. For the first term we have
\begin{align*}
& \int_\Omega \frac{|\nabla\rho^\delta|^2}{\rho^\delta(1-\rho^\delta)}\d x - 2 \int_\Omega \nabla V\cdot\nabla \rho^\delta\d x + \int_\Omega \rho^\delta(1-\rho^\delta)|\nabla V|^2\d x\\
&\ge \frac{1}{2}\int_\Omega \frac{|\nabla\rho^\delta|^2}{\rho^\delta(1-\rho^\delta)}\d x - \int_\Omega \rho^\delta(1-\rho^\delta)|\nabla V|^2\d x\\
&\ge 2\int_\Omega |\nabla\rho^\delta|^2\d x - \frac{1}{4} \int_\Omega |\nabla V|^2\d x,
\end{align*}
where we used Cauchy's inequality to estimate the mixed term and the fact that $\rho^\delta(1-\rho^\delta) \le 1/4$. For the second term we estimate
\begin{align*}
 &-\alpha\int_\Gamma (1-\rho^\delta)\psi^\delta \d\sigma = \alpha \int_\Gamma  \left((1-\rho^\delta)\log \frac{1-\rho^\delta}{\rho^\delta} + 2\rho^\delta - 1 \right)\d \sigma
+\alpha\int_\Gamma \left(-(1-\rho^\delta)V + 1 - 2\rho^\delta\right)\d\sigma.
\end{align*}
The first term in this equation is a Kullback-Leibler distance and thus non-negative. As $V\in H^1(\Omega)$, the trace theorem yields $\left. V\right|_{\partial\Omega} \in L^2(\partial\Omega)$ and since, by definition $0\le \rho^\delta \le 1$, the second term is bounded. For the third term of \eqref{eq:estimate} we write
\begin{align*}
\beta\int_\Sigma \rho^\delta\psi^\delta\d\sigma = \beta \int_\Sigma \left(\rho^\delta \log \frac{\rho^\delta}{1-\rho^\delta} + 2\rho^\delta - 1 \right)\d \sigma - \beta\int_\Sigma \left(-\rho^\delta V - 1 + 2\rho^\delta\right)\d \sigma.
\end{align*}
By the same arguments as above, we conclude that the first term is non-positive while the second one is bounded. 
Summarizing, we obtain
\begin{align*}
 \int_\Omega |\nabla\rho^\delta|^2\d x \le \frac{1}{8} \int_\Omega |\nabla V|^2\d x + \alpha\int_\Gamma \left(-(1-\rho^\delta)V + 1 - 2\rho^\delta\right)\d\sigma - \beta\int_\Sigma \left(-\rho^\delta V - 1 + 2\rho^\delta\right)\d \sigma.
\end{align*}
These estimates yield a a-priori bound for $\rho^\delta$ in $H^1(\Omega)$. Due to $0\le \rho^\delta \le 1$, this allows us to pass to the limit in the weak formulation \eqref{eq:rho_delta_weak}. In particular we have, by passing to subsequences if necessary, 
\begin{align*}
\int_\Omega \nabla\rho^\delta\cdot\nabla\varphi\d x &\to  \int_\Omega \nabla\rho\cdot\nabla\varphi\d x &\text{ since } \nabla\rho^\delta\rightharpoonup\nabla \rho\text{ in }L^2(\Omega),\\
\int_\Omega \rho^\delta(1-\rho^\delta)\nabla V\cdot\nabla\varphi\d x &\to \int_\Omega \rho(1-\rho)\nabla V\cdot\nabla\varphi\d x &\text{ since } \rho^\delta \to \rho \text{ in }L^2(\Omega),\\
\delta\int_\Omega \nabla\psi^\delta\cdot\nabla\varphi + \psi^\delta\varphi\d x &\to 0 &\text{ since } \psi^\delta,\,\nabla\psi^\delta\in L^2(\Omega),\\
- \alpha\int_\Gamma (1-\rho^\delta)\varphi\d\sigma + \beta\int_\Sigma \rho^\delta\varphi\d\sigma &\to - \alpha\int_\Gamma (1-\rho)\varphi\d\sigma + \beta\int_\Sigma \rho\varphi\d\sigma &\text{ since } \rho^\delta \to \rho \text{ in }L^2(\partial\Omega).
\end{align*}
\end{proof}

Note in the potential case we can only conclude that the density $\rho$ takes values between zero and one, for a stronger result depending on the in- and outflow parameters we need to return to the assumptions for incompressible velocity fields:

\begin{cor}\label{cor:M1_max} Let the assumptions of theorem \ref{thm:M1_potential} and additionally $\Delta V = 0$, $\partial_n V = -1$ on $\Gamma$, $\partial_n V = 1$ on $\Sigma$, and
$\partial_n V=0$ on $\partial \Omega \setminus (\Gamma \cup \Sigma)$ hold. Then, the solution $\rho$ to \eqref{eq:M1_potential} supplemented with \eqref{eq:bc1}--\eqref{eq:bc3} satisfies the bounds
\begin{equation}
	 \min\{\alpha,1-\beta\} \le \rho(x) \le \max \{ \alpha, 1-\beta \}
\end{equation}
\end{cor}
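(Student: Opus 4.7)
The idea is to re-run the boundary maximum principle argument from the self-mapping step of Theorem \ref{thm:incompressible}, but now directly on the nonlinear equation \eqref{eq:M1_potential}. The additional hypotheses of the corollary are precisely designed so that $u=\nabla V$ satisfies the incompressibility and normal-velocity conditions of (A3), so the same computation goes through.

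First, I would expand \eqref{eq:M1_potential} in nondivergence form. Since $\Delta V = 0$, the equation becomes
\begin{equation*}
-\Delta \rho + (1-2\rho)\,\nabla V \cdot \nabla \rho = 0 \quad \text{in } \Omega,
\end{equation*}
which is exactly the linear operator \eqref{eq:lin} evaluated at $\bar\rho = \rho$. It has no zeroth-order term, so the weak/strong maximum principle guarantees that $\rho$ attains its extrema on $\partial\Omega$. On the insulating part, $\partial_n V = 0$ reduces the boundary condition to $\partial_n \rho = 0$, and Hopf's lemma rules out extrema there (unless $\rho$ is constant, in which case the bound is trivial from the boundary data).

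Next I would inspect the two active pieces separately. On $\Gamma$, substituting $\partial_n V = -1$ into $-j\cdot n = \alpha(1-\rho)$ gives the reduced boundary condition
\begin{equation*}
\partial_n \rho = (\alpha - \rho)(1-\rho).
\end{equation*}
At a boundary maximum on $\Gamma$, Hopf yields $\partial_n \rho \ge 0$; combined with $\rho \le 1$ from Theorem \ref{thm:M1_potential}, this forces $\rho \le \alpha$ at that point. At a boundary minimum, $\partial_n \rho \le 0$ and $\rho \ge 0$ give $\rho \ge \alpha$. Similarly, on $\Sigma$ the choice $\partial_n V = 1$ reduces the outflow condition to $\partial_n \rho = \rho(1-\rho-\beta)$, from which the analogous Hopf argument yields $\rho \le 1-\beta$ at a maximum and $\rho \ge 1-\beta$ at a minimum. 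Combining the two possible locations of the global maximum (on $\Gamma$ or on $\Sigma$) gives $\rho \le \max\{\alpha, 1-\beta\}$, and the same for the minimum gives $\rho \ge \min\{\alpha,1-\beta\}$.

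The main subtlety I expect is the limited regularity of $V$ (only $H^1$), which means the maximum principle and Hopf's lemma must be applied in a weak sense; this is standard for equations of the form $-\Delta\rho + b\cdot\nabla\rho = 0$ with $b\in L^2$ under mild additional hypotheses, and under the conditions of the corollary (harmonic $V$ with prescribed Neumann data on a $C^2$ domain) elliptic regularity in fact upgrades $V$ considerably near the boundary, so the classical pointwise Hopf lemma applies. A secondary, minor, issue is the degenerate case where $\rho$ equals $0$ or $1$ at the extremum: then one factor in the boundary identity vanishes, but Hopf's strict inequality (for nonconstant $\rho$) together with $0\le\rho\le 1$ still forces the claimed ordering, and a constant $\rho$ trivially satisfies the bounds by evaluating the boundary condition.
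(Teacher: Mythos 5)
Your proposal is correct and follows essentially the same route as the paper: the authors' proof is a one-line remark that for $\Delta V=0$ equation \eqref{eq:M1_potential} satisfies a maximum principle and the claim follows by the same Hopf-lemma boundary argument as in the self-mapping step of Theorem \ref{thm:incompressible}, which is exactly the computation you carry out (including the correct reduced boundary identities $\partial_n\rho=(\alpha-\rho)(1-\rho)$ on $\Gamma$ and $\partial_n\rho=\rho(1-\rho-\beta)$ on $\Sigma$). Your explicit treatment of the degenerate endpoints $\rho\in\{0,1\}$ and of the regularity needed for Hopf's lemma is more careful than the paper's, which silently elides both points.
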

\begin{proof}
Since for $\Delta V = 0$ the equation \eqref{eq:M1_potential} fulfills a maximum principle, the assertion follows by similar arguments as in the proof of Theorem \ref{thm:incompressible}.
\end{proof}

\begin{rem} Note that testing the weak form \eqref{eq:rho_delta_weak} with the entropy variable $\psi^\delta$ yields to estimates analogous to those that are obtained by the entropy dissipation method in the time dependent case. In fact, if equation \eqref{eq:ent_approx} would feature the additional term $\partial_t \rho$ (parabolic case), then differentiating the entropy functional with respect to time would yield 
\begin{align*}
 \partial_t E(\rho^\delta) = \int_\Omega \left[\nabla\cdot((-\nabla\rho^\delta + \rho^\delta(1-\rho^\delta)\nabla V - \delta \nabla \psi^\delta) + \delta\psi^\delta\right] (\ln \rho^\delta - \ln (1-\rho^\delta) - V)\;dx.
\end{align*}
Recalling the definition $\psi^\delta = (\ln \rho^\delta - \ln (1-\rho^\delta ) - V)$ and after integration by parts one obtains
\begin{align}\label{eq:ent_diss_approx}
 \partial_t E(\rho^\delta) &= -\int_\Omega  \left(-\nabla\rho^\delta + \rho^\delta(1-\rho^\delta)\nabla V   \right)\cdot \nabla \psi^\delta  - \delta |\nabla \psi^\delta|^2 + \delta |\psi^\delta|^2\;dx\\
            &-\alpha \int_\Gamma (1-\rho^\delta)\psi^\delta\;d\sigma  + \beta\int_\Sigma \rho^\delta\;d\sigma.\nonumber
\end{align}
This means that in the entropy dissipation, we obtain the same terms as in equation \eqref{eq:estimate}. While in the stationary case, their sum is zero, we can conclude boundedness in the parabolic case by integrating \eqref{eq:ent_diss_approx} with respect to time to conclude 
\begin{align*}
E(\rho^\delta) &+ \int_0^T \int_\Omega  \left(-\nabla\rho^\delta + \rho^\delta(1-\rho^\delta)\nabla V   \right)\cdot \nabla \psi^\delta  - \delta |\nabla \psi^\delta|^2 + \delta |\psi^\delta|^2\;dx\,dt\\
            &-\alpha \int_0^T\int_\Gamma (1-\rho^\delta)\psi^\delta\;d\sigma\,dt  + \beta\int_0^T\int_\Sigma \rho^\delta\;d\sigma\,dt  \le E(\rho_0) \le C,
\end{align*}
where $\rho_0$ denotes the initial datum.

\end{rem}

\begin{rem} We finally mention that for convenience we used a diffusion coefficient equal to one, but all results of this section remain true for an arbitrary value $D  > 0$ and even for regular spatially varying coefficients. This is important for the following section, where study the natural case of a small diffusion coefficient.
\end{rem}

\section{Asymptotic Unidirectional Flow Characteristics}\label{sec:asymptotic}

In the following we discuss in detail the flow properties of the single species model for small diffusion $D = \eps \ll 1$, in particular for the stationary solution $\rho \in H^1(\Omega) \times L^\infty(\Omega)$ of 
\begin{equation}
	 \nabla \cdot( -\epsilon \nabla \rho + \rho (1-\rho) u) = 0, \qquad \text{in } \Omega \label{unifloweq}
\end{equation}
with boundary conditions \eqref{eq:bc1}--\eqref{eq:bc3}.
We are interested in the asymptotic behaviour as $\epsilon \downarrow 0$, in particular the boundary layers and the asymptotic flow patterns, which we expect to be characterised by three different phases as in \cite{Wood2009}:
\begin{itemize}

\item An {\em influx-limited} phase with an asymptotically low density corresponding to a density of outgoing particles on $\Sigma$.

\item An {\em outflux-limited} phase with an asymptotically high density corresponding to a density of incoming particles on $\Gamma$, with a boundary layer on $\Sigma$ created by lower outflux rates.

\item A {\em maximal current } phase with asymptotic density $\frac{1}2$ and boundary layers both at $\Sigma$ and $\Gamma$, which occurs at high in- and outflow rates.
\end{itemize}
First we note that a direct conclusion from the maximum principle \eqref{rhobounds} is the non-appearance of a maximal current phase if 
\begin{align*}
\frac{1}2 \notin \left[ \min\{\alpha, (1-\beta) \}  , \max\{\alpha,(1-\beta) \}  \right]. 
\end{align*}
In this case the maximum principle implies that the densities are bounded away from $\frac{1}2$ uniformly in $\epsilon$. 


\subsection{Characterization of Phases in the One-Dimensional Flow}

We now turn to the one-dimensional case with constant velocity, where we can use a scaling of space and flow such that $\Omega=[0,1]$, 
$u \equiv 1$, $\Gamma =\{0\}$, and $\Sigma=\{1\}$. This setting corresponds exactly to the continuum limit of the setting in \cite{Wood2009}, and we will rigorously show that indeed the same behaviour as in the TASEP with stochastic entrance and exit conditions - with transitions between the phases at exactly the same parameter values - appears for the continuum limit.
For convenience we restate the one-dimensional version of \eqref{unifloweq}, \eqref{eq:bc1}--\eqref{eq:bc3} as
\begin{equation}
- \epsilon \partial_{xx} \rho + \partial_x ( \rho (1-\rho) ) =  0 \qquad \text{in } (0,1), \label{uniflow1deq}
\end{equation}
with boundary conditions
\begin{align}
	&\epsilon \partial_x \rho =  (1- \rho) ( \rho - \alpha) & \text{at } x=0, \\
	&\epsilon \partial_x \rho =  \rho (1 - \rho - \beta) & \text{at } x=1. \label{uniflow1dbc2}
\end{align}

A first result particular for the one-dimensional case is the uniqueness of a solution:
\begin{prop}
There exists exactly one weak solution $\rho \in H^1(\Omega)$ of \eqref {uniflow1deq}-\eqref{uniflow1dbc2}.
\end{prop}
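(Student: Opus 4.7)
Existence is essentially a corollary of Theorem~\ref{thm:incompressible}: in the 1D setup with $n=1$, $\Omega=(0,1)$, $\Gamma=\{0\}$, $\Sigma=\{1\}$, and the constant velocity $u\equiv 1$, Assumptions (A1), (A1'), (A2), and (A3) are all trivially verified ($\nabla\cdot u\equiv 0$, $u\cdot n=-1$ on $\Gamma$, $u\cdot n=1$ on $\Sigma$). Combined with the final remark of Section~\ref{sec:basic} admitting an arbitrary diffusion coefficient $D=\epsilon>0$, Theorem~\ref{thm:incompressible} produces a weak solution $\rho\in H^1(0,1)\cap L^\infty(0,1)$ satisfying $\min\{\alpha,1-\beta\}\le\rho\le\max\{\alpha,1-\beta\}$.

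For uniqueness I would exploit the one-dimensional structure. The weak form of \eqref{uniflow1deq} says that the flux $j(x):=-\epsilon\rho'(x)+\rho(x)(1-\rho(x))$ has vanishing distributional derivative on $(0,1)$, so $j\equiv J$ is a single constant. Since $\rho\in H^1(0,1)\hookrightarrow C([0,1])$, this relation determines $\rho'$ as a continuous function of $x$, and a short bootstrap gives $\rho\in C^\infty([0,1])$; the boundary conditions \eqref{uniflow1dbc2} then read pointwise as
$$J=\alpha(1-\rho(0))=\beta\rho(1).$$
Consequently every weak solution is the classical solution of the scalar IVP $\epsilon\rho'=\rho(1-\rho)-J$, $\rho(0)=\rho_0$, subject to the compatibility conditions $J=\alpha(1-\rho_0)$ and $\beta\,\rho(1;\rho_0,J)=J$.

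Assuming $\alpha,\beta>0$, the plan is to reduce uniqueness to monotonicity of the scalar shooting function
$$\Phi(\rho_0):=\beta\,\rho(1;\rho_0,\alpha(1-\rho_0))-\alpha(1-\rho_0),\qquad\rho_0\in[0,1].$$
The summand $-\alpha(1-\rho_0)$ is strictly increasing in $\rho_0$. For the other, pick $\rho_0^{(1)}>\rho_0^{(2)}$ and set $J_i:=\alpha(1-\rho_0^{(i)})$, so that $J_1<J_2$; then the autonomous vector fields $f_i(\rho):=(\rho(1-\rho)-J_i)/\epsilon$ satisfy $f_1>f_2$ pointwise and the initial data are strictly ordered, so the standard scalar ODE comparison theorem gives $\rho(x;\rho_0^{(1)},J_1)>\rho(x;\rho_0^{(2)},J_2)$ throughout $[0,1]$ (Lipschitz trajectories with strictly ordered right-hand sides cannot meet). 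Hence $\Phi$ is strictly increasing and has at most one zero, proving uniqueness.

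The main subtlety I expect is treating the degenerate cases $\alpha=0$ or $\beta=0$, where the shooting construction collapses and the monotonicity argument has to be replaced by direct inspection: $\alpha=0$ forces $J=0$, so the ODE reduces to $\epsilon\rho'=\rho(1-\rho)$, and $\beta\rho(1)=0$ with $\beta>0$ then yields $\rho\equiv 0$; symmetrically, $\beta=0$ with $\alpha>0$ gives $\rho\equiv 1$. The doubly degenerate case $\alpha=\beta=0$ admits a one-parameter family of logistic profiles and is presumably implicitly excluded. Apart from this bookkeeping, the only routine point is verifying that the IVP solution stays inside the interval dictated by the a priori bounds on weak solutions, so that $\rho(1;\rho_0,J)$ is well-defined and the comparison principle applies unimpeded up to $x=1$.
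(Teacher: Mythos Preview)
Your existence argument via Theorem~\ref{thm:incompressible} matches the paper. For uniqueness, your shooting/comparison route is correct but genuinely different from what the paper does. The paper subtracts two solutions, writes the linear equation $-\epsilon w'' + ((1-\rho_1-\rho_2)w)'=0$ for $w=\rho_1-\rho_2$, and applies the integrating-factor substitution $w=e^{V}v$ with $-\epsilon V'=1-\rho_1-\rho_2$; this turns the problem into $(e^{V}v')'=0$ with sign-definite Robin data $\epsilon v'(0)=\alpha v(0)$, $\epsilon v'(1)=-\beta v(1)$, and testing against $v$ forces $v\equiv 0$ in one line. Your approach is more hands-on: no clever transformation, just strict monotonicity of the shooting map coming from scalar ODE comparison. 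The paper's argument is shorter and treats all $\alpha,\beta\ge 0$ uniformly without the case split you need for $\alpha=0$ or $\beta=0$; on the other hand, your monotonicity observation is structurally informative and closer to how one would actually compute the solution numerically. Both methods are intrinsically one-dimensional, and both share the same blind spot at $\alpha=\beta=0$ that you correctly flag (the paper's proposition is silent on this degenerate case, where the logistic family indeed gives non-uniqueness).
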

\begin{proof}
Let $\rho_1$ and $\rho_2$ be two solutions, then $w=\rho_1-\rho_2$ satisfies
$$  - \epsilon \partial_{xx} w + \partial_x ((1-\rho_1-\rho_2) w ) = 0 $$
with boundary conditions
\begin{align*}
	&-\epsilon \partial_x w + (1-\rho_1-\rho_2) w =  - \alpha w & \text{at } x=0, \\
	&-\epsilon \partial_x w + (1-\rho_1-\rho_2) w =  \beta w & \text{at } x=1. 
\end{align*}
Now let $V \in H^2([0,1])$ be such that $-\epsilon \partial V =(1-\rho_1-\rho_2)$ and $w=e^V v$. Then, $v$ is the weak solution of
$$ \partial_x( e^V \partial_x v) = 0 $$ 
in $(0,1)$ with boundary conditions 
\begin{align*}
	&\epsilon  \partial_x v =  \alpha v & \text{at } x=0, \\
	&\epsilon \partial_x v =  -\beta v & \text{at } x=1. 
\end{align*}
Using the weak formulation of this boundary value problem with test function $v$ implies
$$ \int_0^1 e^V v^2~dx + \alpha e^{V(0)} v(0)^2 + \beta e^{V(0)} v(1)^2 = 0, $$
which yields $v \equiv 0$ and thus uniqueness of the solution. 
\end{proof}

We start our analysis of the flow properties with a simple calculation relating the difference of $\rho$ to the constant state $\frac{1}2$ to the boundary values:
\begin{lemma} \label{1dlemmax}
Let $\rho \in H^1([0,1])$ be the unique weak solution of \eqref {uniflow1deq}-\eqref{uniflow1dbc2}. Then the estimate 
\begin{equation}
\int_0^1 \left(\rho - \frac{1}2 \right)^2 ~dx + \beta \rho(1) - \frac{1}4  \leq \epsilon \vert 1 - \alpha - \beta \vert
\end{equation}
holds.
\end{lemma}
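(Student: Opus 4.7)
The plan is to exploit the fact that in one dimension the equation \eqref{uniflow1deq} immediately integrates once. Setting $J := -\epsilon \partial_x \rho + \rho(1-\rho)$, the PDE reads $\partial_x J = 0$, so $J$ is a constant on $(0,1)$. Evaluating at the endpoints and using the boundary conditions \eqref{uniflow1dbc2}, one reads off
$$J = \alpha\bigl(1-\rho(0)\bigr) = \beta\rho(1).$$

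Next, I would use the algebraic identity $\rho(1-\rho) = \tfrac14 - (\rho - \tfrac12)^2$ to rewrite the definition of $J$ as
$$\epsilon \,\partial_x \rho = \tfrac14 - J - \bigl(\rho-\tfrac12\bigr)^2.$$
Integrating over $x \in (0,1)$ and using the fundamental theorem of calculus yields
$$\epsilon\bigl(\rho(1) - \rho(0)\bigr) = \tfrac14 - J - \int_0^1 \bigl(\rho - \tfrac12\bigr)^2\, dx.$$
Substituting $J = \beta\rho(1)$ and rearranging produces the clean identity
$$\int_0^1 \bigl(\rho - \tfrac12\bigr)^2\, dx + \beta\rho(1) - \tfrac14 = \epsilon\bigl(\rho(0) - \rho(1)\bigr).$$

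Finally, to conclude the lemma, I would bound $\rho(0) - \rho(1)$ using the pointwise bounds from Theorem \ref{thm:incompressible}, which apply in this one-dimensional incompressible setup since $u \equiv 1$ satisfies (A3). These give $\rho(x) \in \bigl[\min\{\alpha,1-\beta\},\, \max\{\alpha,1-\beta\}\bigr]$, hence
$$\rho(0) - \rho(1) \;\leq\; \max\{\alpha,1-\beta\} - \min\{\alpha,1-\beta\} \;=\; |\alpha - (1-\beta)| \;=\; |1-\alpha-\beta|.$$
If $\rho(0) \leq \rho(1)$, the right-hand side of the boxed identity is nonpositive and the claim is trivial; otherwise the displayed inequality is exactly what is needed. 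This is essentially a one-line computation after the conservation of current and the maximum principle are in hand, so I do not anticipate a real obstacle — the only subtlety is recognising that the natural interval from the maximum principle has width precisely $|1-\alpha-\beta|$, which is what makes the bound match the three-phase structure suggested by the cases $\alpha \lessgtr 1-\beta$.
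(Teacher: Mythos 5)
Your proof is correct and follows essentially the same route as the paper: testing the weak form with $\varphi(x)=x$, as the paper does, is equivalent to your integration of the constant flux identity $J=-\epsilon\partial_x\rho+\rho(1-\rho)=\beta\rho(1)$ over $(0,1)$, and both arguments finish by bounding $\epsilon\bigl(\rho(0)-\rho(1)\bigr)$ via the maximum-principle bounds $\min\{\alpha,1-\beta\}\le\rho\le\max\{\alpha,1-\beta\}$.
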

\begin{proof}
Using the test function $\varphi(x)=x$ in the weak form of \eqref {uniflow1deq} and adding and subtracting $\frac{1}4$ we find
$$ \int_0^1 (\epsilon \partial_x \rho  + (\rho - \frac{1}2)^2)~dx + \beta \rho(1) - \frac{1}4 = 0 .$$
Further integrating the first term and using the a-priori bounds from the maximum principle for the boundary values concludes the proof.
\end{proof}

Lemma \ref{1dlemmax} will yield the desired asymptotic estimate if we can guarantee that $\beta \rho(1) \geq \frac{1}4$, such that the second term on the left-hand side is nonnegative. Note also that in spatial dimension one the flux is constant, thus we find
$\beta \rho(1) = \alpha (1- \rho(0))$, i.e. the above result could equally be formulated in terms of $\alpha$ respectively the inflow boundary value. To prove the latter under appropriate conditions is the objective of the next result:

\begin{thm}[Maximal Current Phase] \label{1dflowthm1}
Let $\rho \in H^1([0,1])$ be the unique weak solution of \eqref {uniflow1deq}-\eqref{uniflow1dbc2} and let
$$ \min\{\alpha,\beta\} \geq \frac{1}2.$$ 
Then the estimate 
\begin{equation}
\int_0^1 \left(\rho - \frac{1}2 \right)^2 ~dx  \leq \epsilon \vert 1 - \alpha - \beta \vert
\end{equation}
holds and furthermore, we have $j \ge 1/4$.
\end{thm}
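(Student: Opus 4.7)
The plan is to reduce the theorem to the single inequality $j \ge 1/4$, since then the $L^2$ estimate follows immediately from Lemma \ref{1dlemmax}. In one spatial dimension the steady equation \eqref{uniflow1deq} is just $\partial_x j = 0$ with $j := -\epsilon \partial_x \rho + \rho(1-\rho)$, so $j$ is constant, and evaluating the boundary conditions \eqref{uniflow1dbc2} at the two endpoints gives $j = \alpha(1 - \rho(0)) = \beta \rho(1)$. Under the hypothesis $\min\{\alpha,\beta\} \ge 1/2$, the estimate $j \ge 1/4$ is immediate in either of the cases $\rho(0) \le 1/2$ or $\rho(1) \ge 1/2$; indeed, $j = \alpha(1-\rho(0)) \ge \alpha/2 \ge 1/4$ in the first case, and $j = \beta\rho(1) \ge \beta/2 \ge 1/4$ in the second. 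The only remaining case to treat is the ``crossing'' configuration $\rho(0) > 1/2 > \rho(1)$.

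To rule this case out whenever $j < 1/4$, I would exploit the ODE structure. Since $j$ is constant, $\rho \in H^1([0,1])$ actually satisfies the autonomous first-order ODE $\epsilon \rho'(x) = \rho(x)(1-\rho(x)) - j$ pointwise; bootstrapping from $\rho \in L^\infty$ immediately yields $\rho \in C^\infty([0,1])$. Suppose for contradiction $j < 1/4$. Then the right-hand side factorises as $-(\rho - \rho_-)(\rho - \rho_+)$ with $\rho_\pm = 1/2 \pm \sqrt{1/4 - j}$ satisfying $\rho_- < 1/2 < \rho_+$; moreover the boundary relations force $\rho(0) = 1 - j/\alpha > 1/2$ and $\rho(1) = j/\beta < 1/2$, so we are indeed in the crossing case. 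Let $x_0 \in (0,1]$ be the smallest point at which $\rho(x_0) = 1/2$, which exists by continuity. By minimality $\rho \ge 1/2$ on $[0,x_0]$, hence $\rho'(x_0) \le 0$; but the ODE at $x_0$ reads $\epsilon \rho'(x_0) = 1/4 - j > 0$, a contradiction.

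Once $j \ge 1/4$ is established, the theorem is completed by inserting this into Lemma \ref{1dlemmax}: the term $\beta \rho(1) - 1/4 = j - 1/4$ on its left-hand side is non-negative, so it can be dropped to yield the asserted $L^2$ bound. The main obstacle is really the crossing case $\rho(0) > 1/2 > \rho(1)$: the maximum principle from Theorem \ref{thm:incompressible} alone gives only $1-\beta \le \rho \le \alpha$, an interval that contains $1/2$ under the present hypotheses and therefore does not by itself preclude such a crossing. Recognising that one must exploit the constancy of $j$ and the associated phase-plane picture, rather than relying solely on pointwise a priori bounds, is the key step.
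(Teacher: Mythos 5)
Your proof is correct, but the key step---establishing $j\ge 1/4$---is carried out by a genuinely different argument than the one in the paper. Both proofs reduce the theorem to showing $\beta\rho(1)\ge 1/4$ via Lemma \ref{1dlemmax}, and both exploit the constancy of the flux to translate $j<1/4$ into the two-sided boundary information $\rho(0)>1/2>\rho(1)$ (the ``crossing'' configuration). To rule this out, the paper tests the weak form with $H(\rho)$, where $H$ is a smooth monotone approximation of the Heaviside function of $\rho-\tfrac12$, and derives the contradiction $-\delta\ge -3\gamma^2$; this is an entropy-inequality-type argument in the spirit of Kru\v{z}kov entropies, works directly at the level of the weak formulation, and is the kind of device one would hope to extend to multidimensional or less regular settings. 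You instead upgrade $\rho$ to a classical solution of the autonomous ODE $\epsilon\rho'=\rho(1-\rho)-j$ (legitimate in 1D, since $j$ constant and $\rho\in H^1\hookrightarrow C^0$ bootstraps to $\rho\in C^\infty$) and observe that at the first point $x_0$ where $\rho$ reaches $\tfrac12$ from above one must have $\rho'(x_0)\le 0$, while the ODE forces $\epsilon\rho'(x_0)=\tfrac14-j>0$. This phase-plane argument is shorter and more elementary, and it makes transparent \emph{why} a crossing is impossible when $j<1/4$ (the value $\tfrac12$ lies strictly between the two equilibria $\rho_\pm$, so trajectories cannot cross it downwards); its price is that it leans entirely on the one-dimensional ODE structure and classical regularity, so unlike the paper's test-function argument it offers no route to generalisation. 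All the auxiliary steps in your write-up (the sign of $j$, the deduction $\rho(0)=1-j/\alpha>1/2$ and $\rho(1)=j/\beta<1/2$ from $\alpha,\beta\ge\tfrac12$ and $j<\tfrac14$, and dropping the nonnegative term $\beta\rho(1)-\tfrac14$ in Lemma \ref{1dlemmax}) check out.
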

\begin{proof}
Using Lemma \ref{1dlemmax} it suffices to show $\beta \rho(1) \geq 1/4$, which we carry out by contradiction.
Assume $ \rho(1) =\frac{1}{4\beta} - \delta$ with $\delta > 0$. Since $\beta \geq \frac{1}2$ and $\alpha \geq \frac{1}2$ we conclude in particular
$$ \rho(1) =\frac{1}{4\beta} - \delta \leq \frac{1}2 \qquad \text{ and} \qquad \rho(0) =1 - \frac{\beta}\alpha \rho(1) \geq 1- \frac{1}{4\alpha} + \frac{\beta}\alpha \delta \geq \frac{1}2+ \frac{\beta}\alpha \delta. $$ 
Let $H$ be a smooth monotone function such that
$$ H(0)=0, \qquad H(1)=1, \qquad \text{supp}(H') \subset (\frac{1}2 - \gamma, \frac{1}2+\gamma) $$
with $\gamma <\min\{\delta,\frac{\beta}{\alpha}\delta\}$. 
Now we choose the test function
$ \varphi = H(\rho)$
in the weak form of \eqref {uniflow1deq} again with $\frac{1}4$ added and subtracted.
Then we find
$$ \int_0^1 \left( \epsilon H'(\rho) |\partial_x \rho|^2 - H'(\rho)\rho (1-\rho) \partial_x \rho \right)~dx + \beta \rho(1) (H(\rho(1)) - H(\rho(0)) = 0. $$
Using the nonnegativity of the first term and rewriting the second term yields
$$  (\beta \rho(1) - \frac{1}4) (H(\rho(1)) - H(\rho(0)) \leq - \int_0^1 H'(\rho) (\rho - \frac{1}2)^2 \partial_x \rho ~dx 
= F(\rho(0)) - F(\rho(1)), $$ 
where $F$ satisfies $F'(p) = H'(p) (\rho-\frac{1}2)^2$ and $F(0)=0$. With the properties of $H$ it is straightforward to see that 
$$ H(\rho(1)) - H(\rho(0)) = 1, \qquad F(\rho(1)) = 0, \qquad F(\rho(0)) \leq 3 \gamma^2 . $$
Hence, we conclude 
$$ - \delta = \beta \rho(1) - \frac{1}4 \geq - 3 \gamma^2 , $$
which is a contradiction for $\gamma$ sufficiently small. Since the flux is constant, the fact that $j \ge 1/4$ follows immediately from \eqref{eq:bc2}.
\end{proof}

We remark that the strategy of proof of Theorem \ref{1dflowthm1} is reminiscent of entropy solution concepts for conservation laws and parabolic equations (cf. \cite{karlsen}), where roughly speaking the Heaviside function applied to $\rho - c$ for arbitrary constant $c$ multiplied with a nonnegative smooth function is used as a test function to define entropy inequalities. The function $H$ in the above proof will indeed approximate the Heaviside function of $\rho - \frac{1}2$ as $\gamma$ tends to zero.

In the inflow- and outflow-limited case the analysis is easier, an estimate like in Lemma \ref{1dlemmax} suffices:
\begin{thm}[Inflow- and Outflow Limited Phases] \label{1dflowthm2}
Let $\rho \in H^1([0,1])$ be the unique weak solution of \eqref {uniflow1deq}-\eqref{uniflow1dbc2} and let
$$ \max\{\alpha,\beta\} < \frac{1}2.$$ 
Then for $\alpha <  \beta$ the estimate 
\begin{equation}
\int_0^1 |\rho - \alpha| ~dx \leq \epsilon \frac{1- \alpha - \beta}{\beta - \alpha} \end{equation}
holds, while for $\alpha > \beta$
\begin{equation}
\int_0^1 |\rho - 1 + \beta| ~dx \leq \epsilon \frac{1- \alpha - \beta}{\alpha-\beta} \end{equation}
\end{thm}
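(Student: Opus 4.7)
The plan is to combine three ingredients: the pointwise maximum principle from Theorem \ref{thm:incompressible}, the fact that in one dimension the flux $j = -\epsilon \partial_x \rho + \rho(1-\rho)$ is a constant of motion, and the algebraic identity
$$ \rho(1-\rho) - \alpha(1-\alpha) = (\rho - \alpha)(1 - \rho - \alpha), $$
which factors out the deviation from $\alpha$. This identity will convert flux conservation into an integral bound on $\rho - \alpha$.

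First I would reduce to a single case via the involution $\tilde\rho(\tilde x) := 1 - \rho(1 - \tilde x)$, which is readily checked to preserve \eqref{uniflow1deq} while exchanging the two boundary conditions and therefore the roles of $\alpha$ and $\beta$. It thus suffices to prove the first estimate under $\alpha < \beta < \tfrac{1}{2}$; the case $\alpha > \beta$ then follows by applying the first case to $\tilde\rho$ and changing variables. Since the hypotheses (A1)--(A3) of Theorem \ref{thm:incompressible} are satisfied by $u \equiv 1$ on $\Omega = (0,1)$, the maximum principle \eqref{rhobounds} gives $\alpha \le \rho(x) \le 1 - \beta$ pointwise, so both $\rho - \alpha \ge 0$ and $1 - \rho - \alpha \ge \beta - \alpha > 0$.

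Next I would integrate the flux identity over $(0,1)$, using the algebraic factorization above, to obtain
$$
j = -\epsilon\bigl(\rho(1)-\rho(0)\bigr) + \alpha(1-\alpha) + \int_0^1 (\rho-\alpha)(1-\rho-\alpha)\,dx.
$$
The inflow boundary condition yields $j = \alpha(1 - \rho(0)) \le \alpha(1 - \alpha)$ since $\rho(0) \ge \alpha$, so rearrangement gives
$$
\int_0^1 (\rho - \alpha)(1 - \rho - \alpha)\,dx \;\le\; \epsilon\bigl(\rho(1) - \rho(0)\bigr).
$$
Finally, bounding the integrand from below by $(\beta - \alpha)(\rho - \alpha)$ and the right-hand side from above by $\epsilon(1 - \alpha - \beta)$ (using $\rho(1) \le 1 - \beta$ and $\rho(0) \ge \alpha$), then dividing by $\beta - \alpha > 0$, produces the claimed inequality.

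I do not anticipate a genuine obstacle: the proof is essentially an algebraic consequence of flux conservation together with the two-sided a priori bound. The only point worth a remark is that this derivation automatically forces $\rho(1) \ge \rho(0)$ (otherwise the nonnegative left-hand side would have to vanish, giving $\rho \equiv \alpha$ and the estimate trivially), consistent with the expected outflow boundary layer in the inflow-limited phase.
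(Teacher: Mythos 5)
Your proof is correct and takes essentially the same route as the paper: the paper tests the weak form with $\varphi(x)=1-x$, which is equivalent to your integration of the constant-flux identity $j=\alpha(1-\rho(0))$ over $(0,1)$, and then combines the maximum-principle bounds $\alpha\le\rho\le 1-\beta$ with the lower bound $1-\rho\ge\beta$ in the same way your factorization $(\rho-\alpha)(1-\rho-\alpha)=\rho(1-\rho)-\alpha(1-\alpha)$ does. Your explicit involution $\tilde\rho(\tilde x)=1-\rho(1-\tilde x)$ merely makes precise the paper's remark that the case $\alpha>\beta$ is ``analogous.''
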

\begin{proof}
Note that the maximum principle implies $\alpha \leq \rho \leq 1-\beta$ in any of the two cases.
We only detail the case $\frac{1}2 > \beta \geq \alpha$, as the other one is analogous.  Using the test function $\varphi(x) = 1-x$ in the weak formulation and some rewriting we have
$$ 0 = \int_0^1 (\epsilon \partial_x \rho + \rho^2 - \rho )~dx + \alpha (1- \rho (0) ). $$
With some rearranging and the bounds on $\rho$ we have  
$$ \beta \int_0^1 (\rho - \alpha) ~dx \leq \int_0^1 (1- \rho)(\rho - \alpha)~dx \leq \epsilon (1-\beta-\alpha) + \alpha \int_0^1\rho~dx - \alpha \rho(1). $$
Since $\rho \geq \alpha$ we have
$$ (\beta-\alpha) \int_0^1 |\rho - \alpha| ~dx \leq \epsilon (1-\beta-\alpha). $$
\end{proof}
Summing up, we have shown exactly the same behaviour for our the continuum model as \cite{Wood2009} for the discrete TASEP.
%

\subsection{Explicit Solutions in One Spatial Dimension}\label{sec:explicit}
In this section, we briefly discuss explicit solutions to the one-dimensional equations \eqref {uniflow1deq}--\eqref{uniflow1dbc2}. This derivation is mostly based on basic calculus and the details are presented in the appendix. However, this approach allows us to clarify the role of the parameter $\eps$ with respect to the phase diagram. In particular, we can show that for $\eps > 0$, maximal current can occur for values of $\alpha,\,\beta$ that are strictly smaller than $1/2$. 
Since in one space dimension, the flux $j$ is constant, we can set $j=1/4$ and integrate \eqref{uniflow1deq} and obtain the first order ordinary differential equation 
\begin{align*}
 -\partial_x\rho + \rho(1-\rho) = \frac{1}{4},
\end{align*}
which is also known as ``logistic equation with harvesting'' in the context of population dynamics, cf. \cite{Brauer1975,Cooke1986}. Solving this equation subject to the boundary conditions elementary calculation shows that on of the following conditions on $\alpha$ and $\beta$ have to hold in order to obtain a continuous solution:
\begin{align}\label{eq:contex1}
 \frac{1}{2}\frac{1+2\eps}{4\eps+1} < \alpha &< \frac{1}{2}\quad\text{ and }\quad \beta = \frac{1}{2}\frac{4\alpha\eps+2\alpha-1}{8\alpha\eps+2\alpha-2\eps-1},\\\label{eq:contex2}
 \frac{1}{2}\frac{1+2\eps}{4\eps+1} < \beta &< \frac{1}{2}\quad\text{ and }\quad \alpha = \frac{1}{2}\frac{4\beta\eps+2\alpha-1}{8\beta\eps+2\beta-2\eps-1},
\end{align}
Interestingly, maximal flux is achieved for values of $\alpha$, $\beta < 1/2$ which is in contrast to the discrete model, \cite{Wood2009}. To illustrate this, we depicted the changes in the phase diagram for different values of $\eps$ in figure \ref{fig:phase_eps}. In section \ref{sec:num1d} we present numerical results based on a discretization of \eqref{uniflow1deq}--\eqref{uniflow1dbc2} that confirm this observation.

\begin{figure}
\centering
\includegraphics[width=0.8\textwidth]{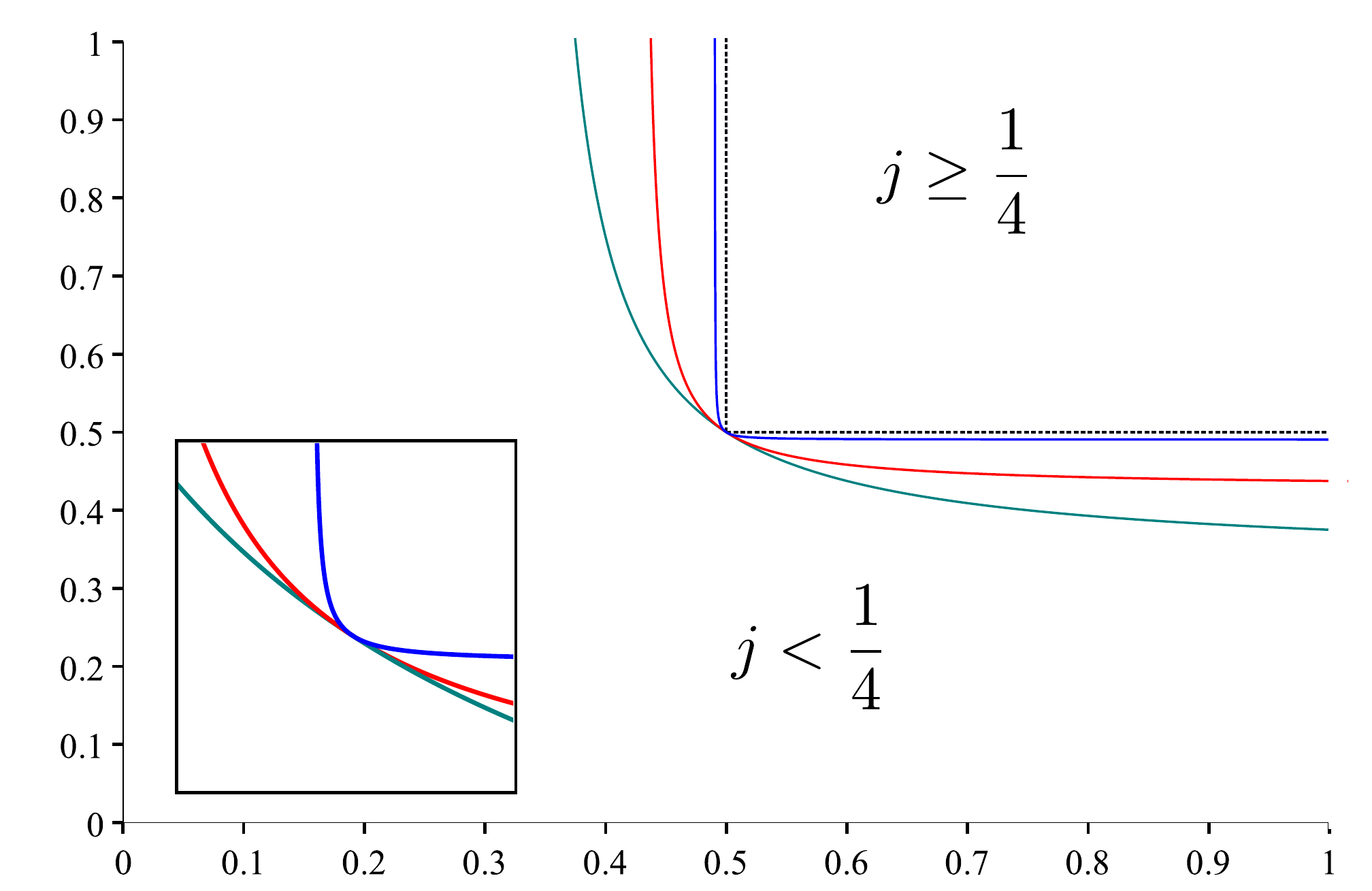}
\caption{In this phase diagram, the solid lines separate the regions of $j\ge 1/4$ (maximal flow) and $j< 1/4$ for the values $\eps = 0.01$ (blue), $\eps = 0.1$ (red) and $\eps = 0.5$ (green). The dashed lines correspond to the discrete case of \cite{Wood2009} where maximal flux is achieved for $\alpha,\,\beta > 1/2$, only. The inlet shows a magnification of the area around the point $(\alpha,\beta)=(1/2,1/2)$.}
\label{fig:phase_eps}
\end{figure}

\section{Numerical Solution}\label{sec:numerics}
In this section we will describe the numerical method that we used and present some examples in one and two space dimensions. Our implementation is based on the discontinuous finite element method which is well-suited for convection dominated problems, see \cite{DiPietro2012} and the references therein. We will not give any details regarding error estimates and the convergence of our algorithm which remains future work.

\subsection{Setting and Discontinuous Galerkin Scheme}\label{sec:scheme}
Let us recall some well-known notations and definitions, cf. \cite{DiPietro2012}. We start by dividing our domain into elements which are triangles in two space dimensions and intervals in 1D. For simplicity, we shall only discuss the two-dimensional case from now on. We cover the domain $\Omega \subset \RR^2$ by a finite collection of triangles which we denote by $\mathcal{T}_h$, where $h$ refers to the diameter of the largest triangle. Furthermore, we denote by $F$ the mesh faces which are characterised by one of the following two conditions:
\begin{enumerate}
 \item Either, there are distinct triangles $T_1$ and $T_2$ such that $F = \partial T_1 \cap \partial T_2$ - $F$ is a interface,
 \item or, there is $T\in \mathcal T_h$ such that $F = \partial T \cap \partial\Omega$ - $F$ is a boundary face.
\end{enumerate}
We denote by $\mathcal F_h^i$ the set of all interfaces, $\mathcal F_h^b$ the boundary faces and by $\mathcal F_h$ the union of these two sets. Furthermore, $ n_F$ is the normal vector of a facet, pointing outward. On $\mathcal{T}_h$ we introduce the broken polynomial space
\begin{align*}
 V_h = \{ v \in L^2(\Omega) \;:\; \forall\, T \in\mathcal{T}_h,\, \left. v \right|_T \in \mathcal{P}^1(T)\,\},
\end{align*}
where $\mathcal{P}^1(T)$ denotes polynomials of degree one on $T$. 
For a scalar function $v$, smooth enough for the expression $\left. v\right|_{F}$ for all $F \in \mathcal F$ to make sense, we define interface averages and jumps in the following way
\begin{align*}
 \avg{v}_F(x) &:= \frac{1}{2} ( \left. v\right|_{T_1}(x) + \left. v\right|_{T_2}(x) ), \text{ for a.e. }x\in F, &\text{(average)},\\
 \jump{v}_F(x) &:=\left. v\right|_{T_1}(x) - \left. v\right|_{T_2}(x),\text{ for a.e. }x\in F, &\text{(jump)}.
\end{align*}
With these definitions at hand, we can state our discontinuous Galerkin scheme. Starting from the weak formulation of a linearized version of \eqref{eq:basic} we consider
\begin{align}\label{eq:weak}
 \underbrace{\eps\int_\Omega\nabla \rho\nabla\phi\d x + \int_\Omega \rho(1-\tilde\rho)u\nabla\phi\d x}_{=:a(\rho,\phi;\tilde\rho)} + \underbrace{\alpha\int_{\Gamma}\rho\phi\d s + \beta\int_{\Sigma}\rho\phi\d s}_{=: a_F(\rho,\phi)} = \underbrace{\alpha\int_\Gamma \phi\d s}_{=:f(\phi)},\quad\phi_h \in H^1(\Omega),
\end{align}
with $\tilde\rho \in H^1(\Omega)\cap L^\infty(\Omega)$ given. In order to obtain a discrete solution $\rho_h \in  V_h$ we define the bilinear form
\begin{align*}
a(\rho_h,\phi_h ; \tilde \rho ) = a^{\rm swip}(\rho_h,\phi_h) + a^{\rm upw}(\rho_h,\phi_h ; \tilde \rho),
\end{align*}
with a symmetric weighted interior penalty method for the diffusion given by
\begin{align*}
 a^{\rm swip}(\rho_h,\phi_h) &= \int_\Omega \eps \nabla_h\rho_h\cdot\nabla_h\phi_h\d x  - \sum_{F\in\mathcal{F}_h} \eps\int_F \left(\avg{\nabla_h\rho_h}\cdot  n_F\jump{\phi_h} + \jump{\rho_h}\avg{\nabla_h\phi_h}\cdot  n_F\right)\d\sigma \\
 &+ \sum_{F\in\mathcal{F}_h}\eta\frac{\eps}{h_F}\int_F\jump{\rho_h}\jump{\phi_h}\d\sigma
\end{align*}
and a upwind scheme for the advection part
\begin{align*}
a^{\rm upw}(\rho_h,\phi_h) &= \int_\Omega -\rho_h((1-\rho_h)u\cdot\nabla_h \phi_h)\d x  + \sum_{F\in\mathcal{F}_h^i}\int_F ((1-\tilde \rho_h)u\cdot  n_F \avg{\rho_h} \avg{\phi_h}\d\sigma\\
&+ \sum_{F\in\mathcal{F}_h^i}\frac{1}{2}|(1-\rho_h)u\cdot  n_F|\int_F \jump{\rho_h}\jump{\phi_h}\d\sigma,
\end{align*}
again with $\tilde \rho_h \in V_h$ given. The local length scale $h_F$ is defined as $h_F = \frac{1}{2}(h_{T_1} + h_{T_2})$, where $T_1$ and $T_2$ are the two triangles adjacent to face $F$. In order to obtain a solution to the original nonlinear problem \eqref{eq:basic} we employ the following semi-implicit iteration scheme: For $u^n_h$ given find $u_h^{n+1}\in V_h$ s.t.
\begin{align}\label{eq:scheme}
 (u_h^{n+1},\phi_h) + \tau(a(u_h^{n+1},\phi_h ; u_h^n) + a_F(u_h^{n+1},\phi_h)) = (u_h^n,\phi_h) + f(\phi_h),\quad \forall \phi_h \in V_h,
\end{align}
with a relaxation parameter $\tau > 0$. Thus in each step one has to solve the following system of linear equations
\begin{align*}
 (M+\tau A) \underline{u}^{n+1} = (M\underline{u}^n+\tau\underline{f}),
\end{align*}
where $\underline{u}$ denotes the vector of coefficient of $u$ in the linear finite element basis, $A$ is the matrix corresponding to the bilinear form $(a + a_F)$, and $M$ denotes the mass matrix. The vector $\underline{f}$ stems from the term $f(\phi_h)$ on the r.h.s. of \eqref{eq:scheme} with $u^n$ being the solution of the previous step. In all experiments below we chose $u_0=1/2$ and $\tau=0.01$. Note that this scheme can be interpreted as a semi-implicit time discretization of the parabolic version of \eqref{eq:basic} with time step size $\tau$.

\subsection{Results in one spatial dimension}\label{sec:num1d}
In one space dimension, we used MATLAB to implement the scheme described above. We will present several examples in the following and consider the domain $\Omega = [0,1]$ discretized by $n=200$ elements. 

\subsubsection*{Different phases}
First we present some examples to illustrate the occurrence of the three different phases (namely \emph{influx limited}, \emph{outflux limited} and \emph{maximal current}) that are analysed in section \ref{sec:asymptotic} (and also in \cite{Wood2009}). We performed simulations for $\eps = 0.1,\, 0.01,\, 0.001$. For $\alpha$ and $\beta$ we chose the values $0.2,\,0.4,\,0.6$ and  $0.4,\,0.2,\,0.7$, respectively. The numerical results confirm the predicted occurrence of three phases and the results are shown in figure \ref{fig:phases}.

\begin{figure}
\centering
\subfigure{\includegraphics[width=0.45\textwidth]{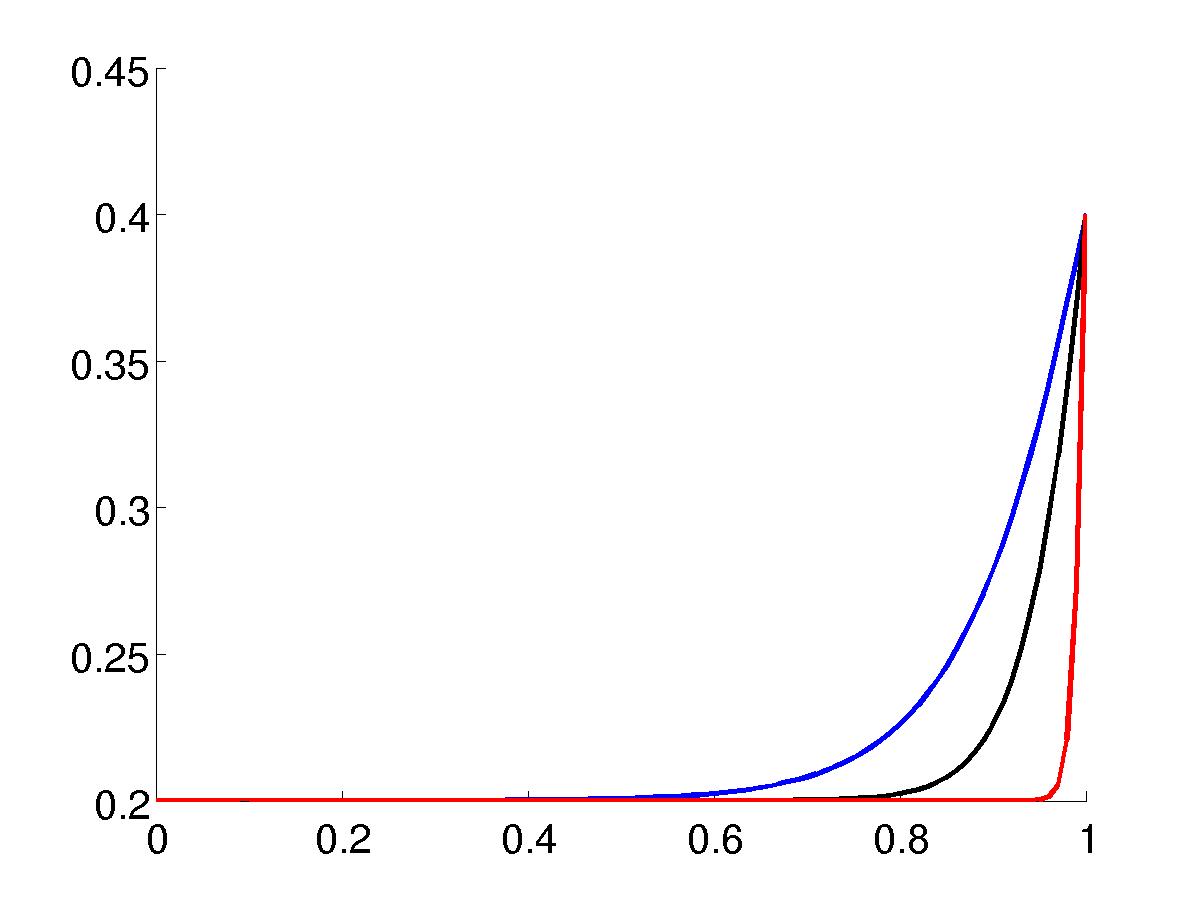}}
\subfigure{\includegraphics[width=0.45\textwidth]{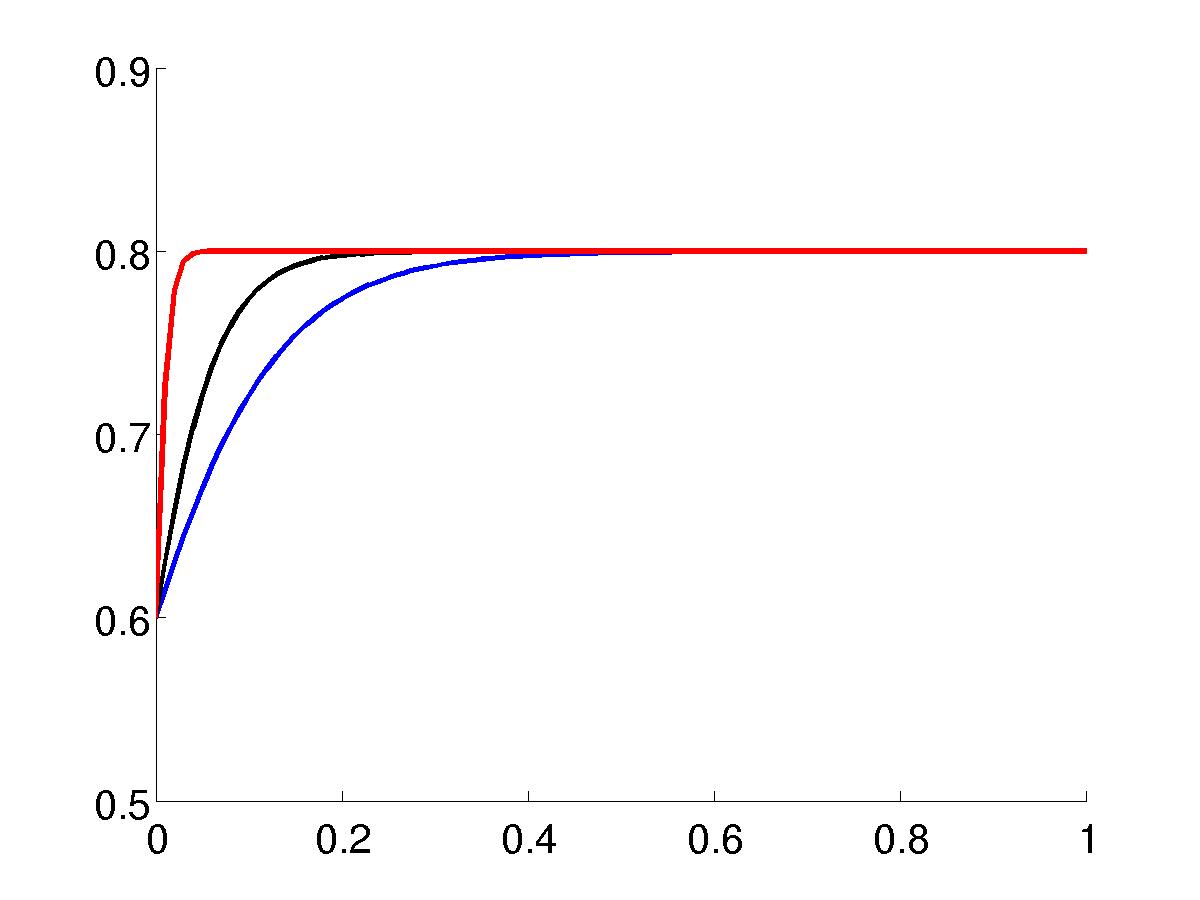}}\\
\subfigure{\includegraphics[width=0.45\textwidth]{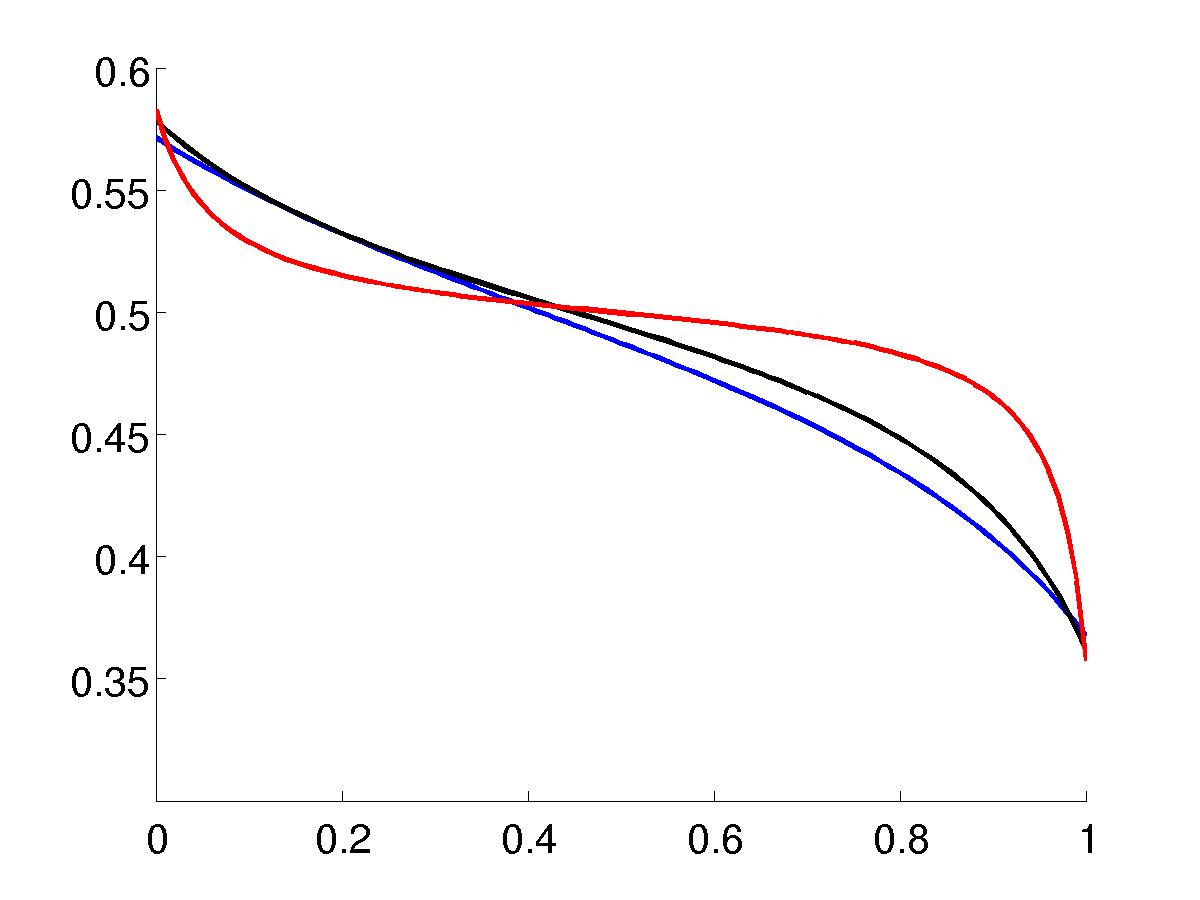}}
\subfigure{\includegraphics[width=0.45\textwidth]{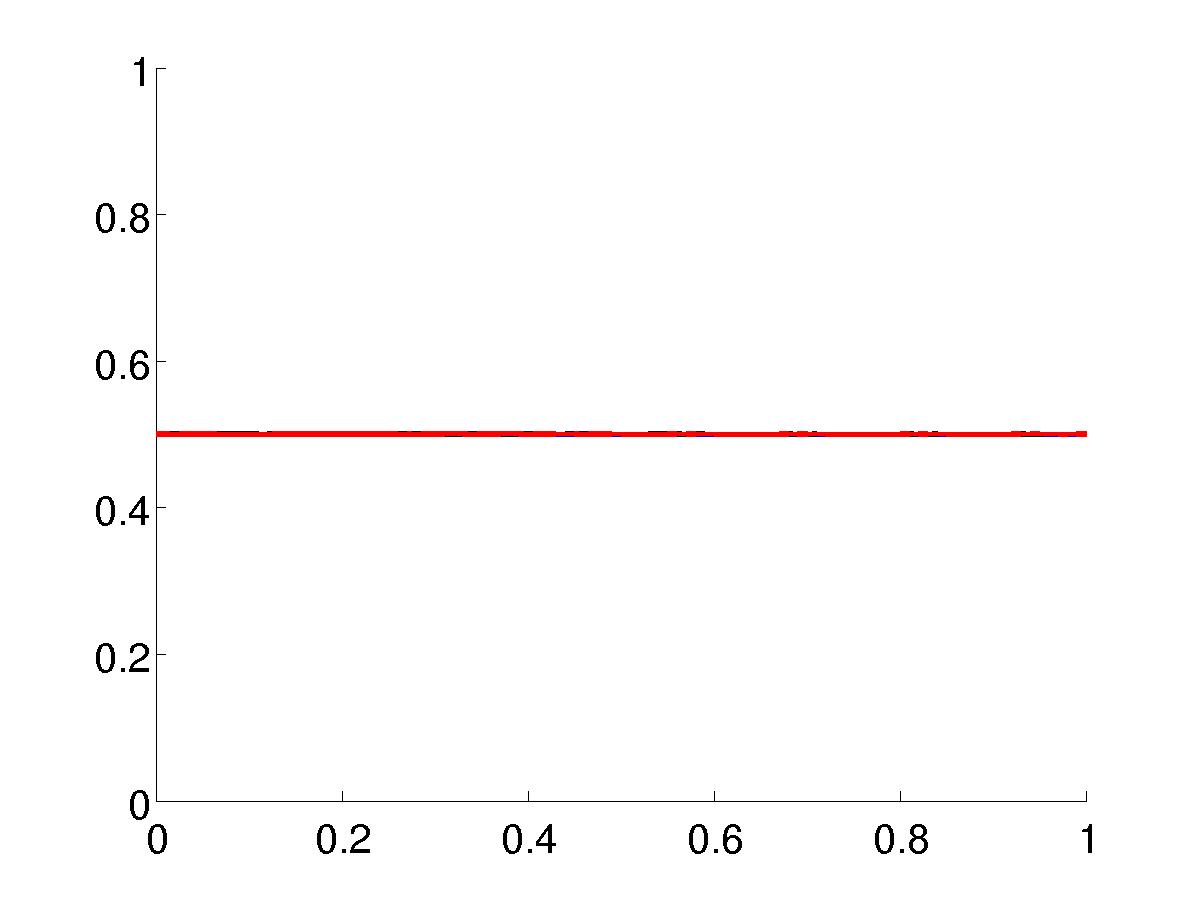}}\\
\caption{Some results of the 1D code for $\epsilon=0.1$ (blue),  $\epsilon=0.05$ (black), and $\epsilon=0.01$ (red).Top left: $\alpha=0.2$, $\beta=0.4$, Top right: $\alpha=0.4$, $\beta=0.2$, Bottom left: $\alpha=0.6$, $\beta=0.7$, Bottom right: $\alpha=0.5$, $\beta=0.5$}
\label{fig:phases}
\end{figure}
%

\subsubsection*{Maximal current for $\alpha<1/2$ or $\beta<1/2$}
Here we present numerical evidence for the results of section \ref{sec:explicit}, namely the occurrence of the maximal flow phase for $\alpha,\,\beta < 1/2$. We chose $\epsilon=0.01$ which yields $\frac{1}{2(2\epsilon+1)}=0.4902$. We chose $\alpha=0.4912$ and by \eqref{eq:contex1}, the corresponding $\beta$ is $0.603773585$. To illustrate the case $\beta < 1/2$ we simply interchange the roles of $\alpha$ and $\beta$. Both results are depicted in Figure \ref{fig:alpha_jmax} and confirm the results from \ref{sec:explicit}.
\begin{figure}
\centering
\subfigure{\includegraphics[width=0.45\textwidth]{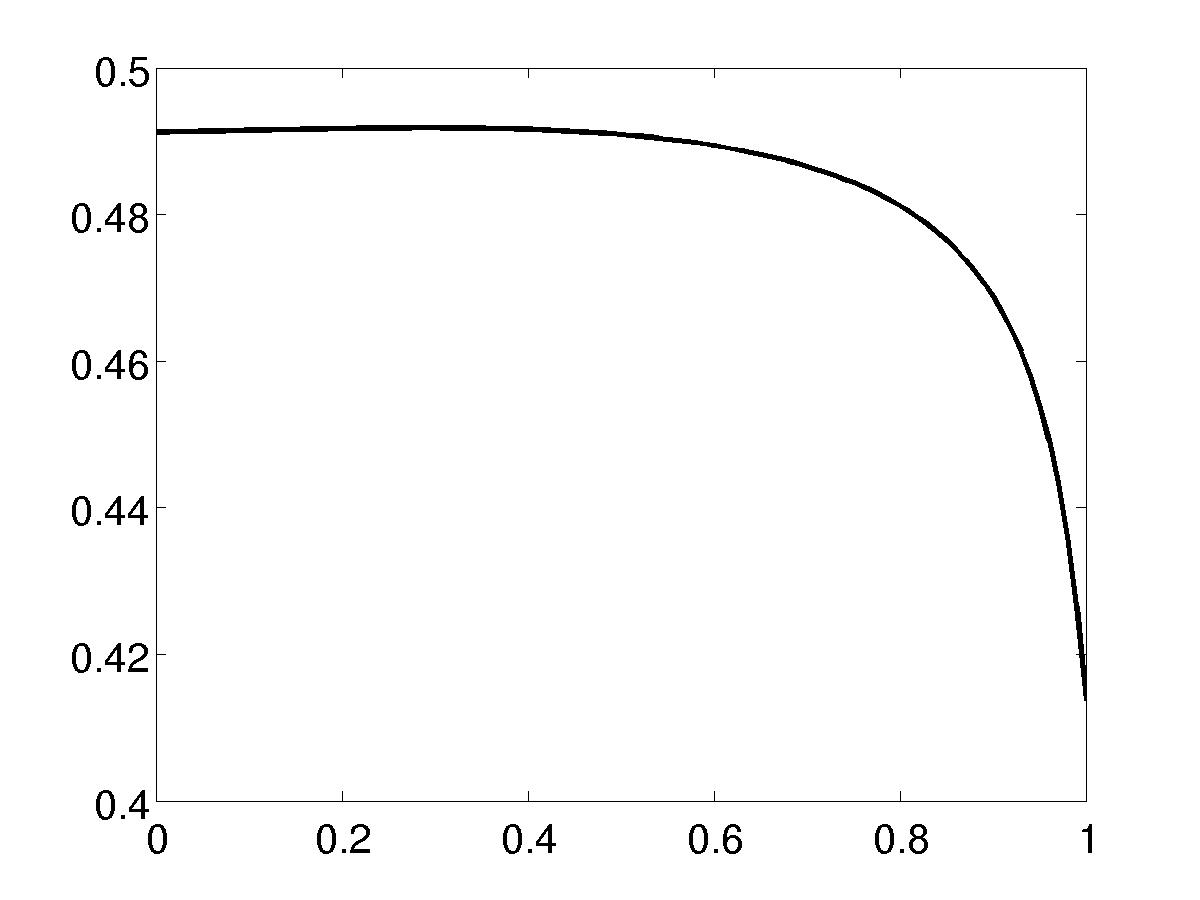}}
\subfigure{\includegraphics[width=0.45\textwidth]{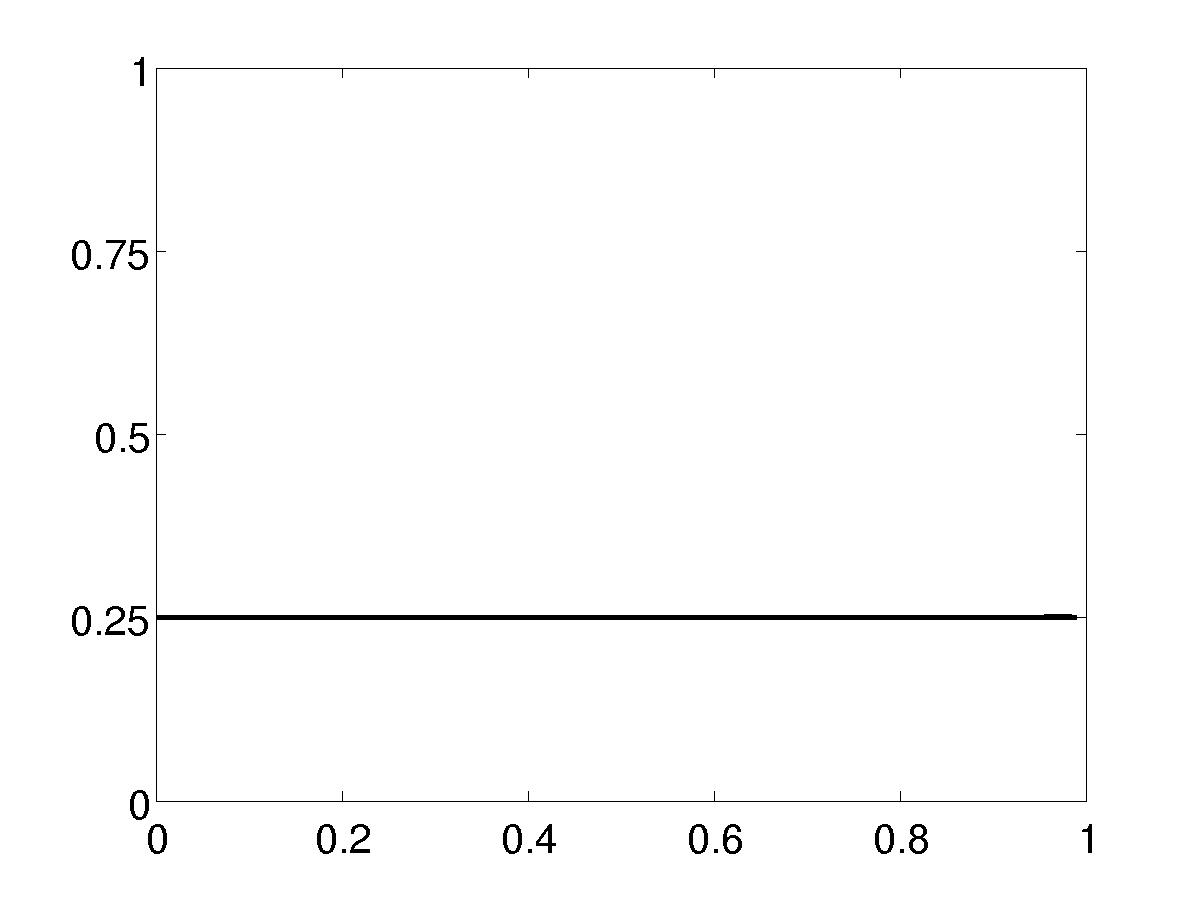}}\\
\subfigure{\includegraphics[width=0.45\textwidth]{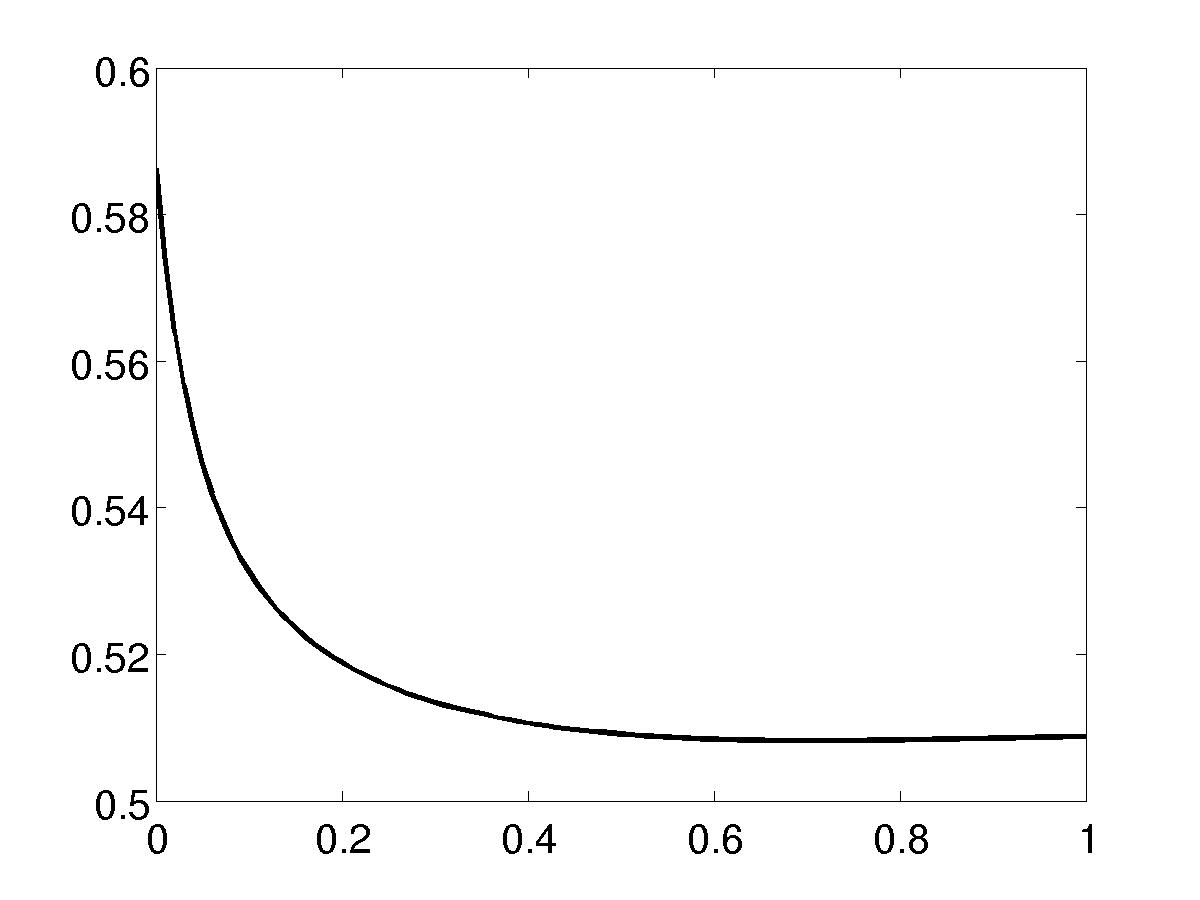}}
\subfigure{\includegraphics[width=0.45\textwidth]{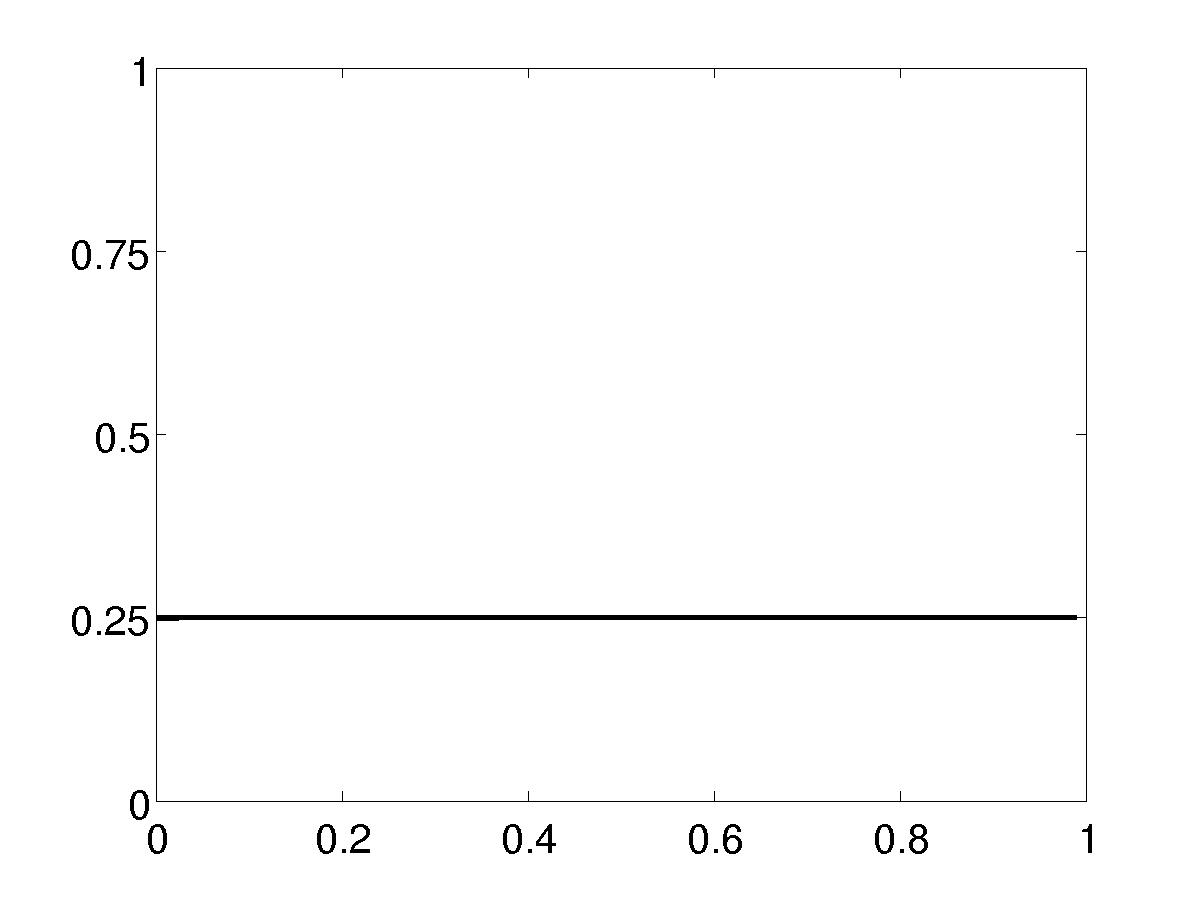}}\\
\caption{For $\eps = 0.01$, the resulting density (left) and flux (right) is depicted for the values $\alpha=0.4912$, $\beta=0.6043$ (top) and $\alpha=0.6043$, $\beta=0.4912$ (bottom). The maximal flux $j=1/4$ is observed in both cases.}
\label{fig:alpha_jmax}
\end{figure}
To further explore the behaviour of the flux, we used the discontinuous Galerkin scheme introduced above to numerically produce a phase diagram by sampling the values of $\alpha,\,\beta$ from $0$ to $1$ with a stepsize of $0.01$. For $\eps = 0.1$ we compared the contour line $j=1/4$ with \eqref{eq:contex1} or \eqref{eq:contex2}, respectively, see figure \ref{fig:phasenum}.
\begin{figure}
\centering
\subfigure{\includegraphics[width=0.49\textwidth]{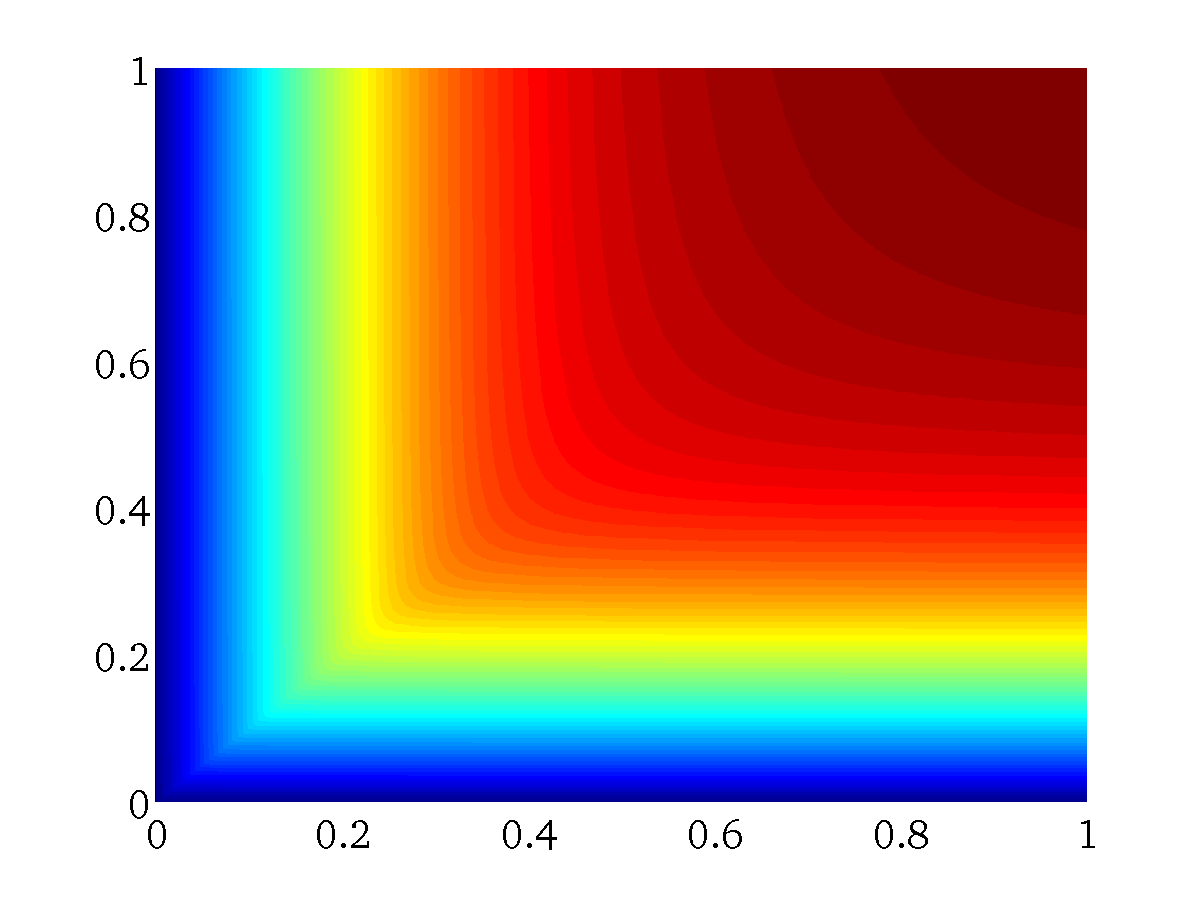}}
\subfigure{\includegraphics[width=0.49\textwidth]{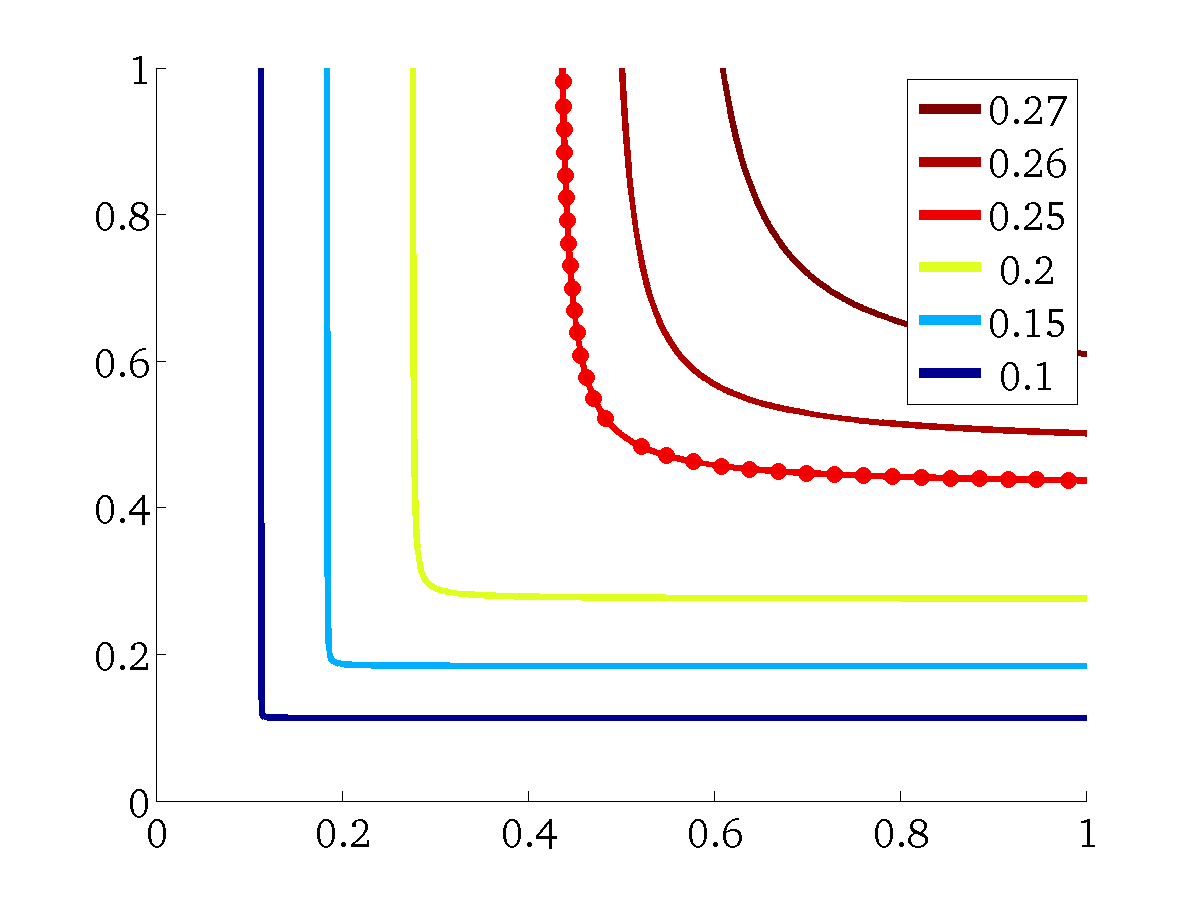}}\\
\caption{For $\eps = 0.1$, the left picture shows a phase diagram generated using the discontinuous Galerkin method described above for values of $\alpha,\,\beta = 0,0.001,0.002,\ldots,1$. On the right side, contour lines for several values of the flux $j$ are depicted. The line for $j=1/4$ (blue line) is compared to the analytical results of section \ref{sec:explicit} (red circles). Compare also with figure \ref{fig:phase_eps}.}
\label{fig:phasenum}
\end{figure}

\subsection{Results in two spatial dimensions}
In two spatial dimensions, we used the software package FeniCS, \cite{Logg2010,Logg2012} to implement the scheme described in section \eqref{sec:scheme}. We present several examples using the domain sketched in figure \ref{fig:sketch2d}, i.e. a corridor of length $2$ and height $1$ with two entrances and two exists on each side. The upper entrace and exit are located at $0.65 < y < 0.85$, the lower ones at $0.15 < y < 0.35$. For each entrance, we have a different inflow rates $\alpha_i$, $i=1,2$, while we have $\beta_i$, $i=1,2$ for the exits.
\subsubsection*{Maximum principle}
In all examples in this section, we use a velocity field given as the gradient of some potential. From theorem \ref{thm:M1_potential} and corollary \ref{cor:M1_max} we know that for general potentials $V$ we only have $0\le \rho \le 1$ while for $V$ satisfying the assumptions of corollary \ref{cor:M1_max} we have that $\min\{\alpha,1-\beta\} \le \rho(x) \le \max \{ \alpha, 1-\beta \}$. To illustrate this, let $V_m$ be given as the solution to the equation
\begin{align*}
 -\Delta V_m &= 0\;\text{in}\;\Omega, \\
 \partial_n V_m &= -1\;\text{ on }\Gamma,\\
 \partial_n V_m &= 1\;\text{ on }\Sigma,\\
 \partial_n V_m&=0\;\text{ on }\partial \Omega \setminus (\Gamma \cup \Sigma),
\end{align*}
with the normalization condition $\int_{\partial\Omega} u \;d\sigma(x) = C$. On the discrete level, we use a mixed method to discretize this equation in order to ensure the condition $\nabla\cdot \nabla V_m$ is fulfilled exactly on the discrete level. The normalization constrained is achieved by setting an arbitrary boundary node to zero. The resulting velocity field $u_m = \nabla V_m$ is depicted in Figure \ref{fig:nablau}. Alternatively we chose $V_l(x)=x$ which yields $u_l = \nabla V_l = (1,0)^t$. In our first example we then chose $\alpha_1 = 0.2$, $\alpha_2 = 0.4$, $\beta_1 = 0.4$ and $\beta_2 = 0.2$ and apply our scheme with $V=V_l$ and $V=V_m$, respectively. The results shown in figure \ref{fig:2dlane} produce the expected behaviour, namely the maximum principle \eqref{rhobounds} only occurs for $V=V_m$. Furthermore, the results show that the asymmetric in- and outflow rates indeed some of the ``particles'' entering at $\alpha_2$ move over two the exit at $\beta_1$. This indicates that the model may be able to also predict lane formation in the case with more than one active species (i.e. \eqref{basiceqn1} with $M>1$), see also \cite{schlakepietschmann}. 

\subsubsection*{High densities and obstables}
In a second example, we explored the situation of maximal flow by using the values $\alpha_1 = 0.6$, $\alpha_2 = 0.9$, $\beta_1 = 0.9$ and $\beta_2 = 0.6$ and $V=V_l$. Note that in both cases we observe on the parts between the in- and outflow boundaries that the maximum principle of theorem \ref{thm:incompressible} does not hold since $u_l \cdot n \neq 0$ on the no-flux boundary. The results are shown in figure \ref{fig:2dmax}. 
Since this example shows that high densities can occur between the two exits, we modify the domain by adding an round obstacle in front of the doors as shown in figure \ref{fig:2dhole}. This is motivated by results from models for human crowd motions where in some situations, an obstacle in front of the exits can improve the situation. Indeed, our results show that the densities between the two exits decreases, however at the price of a large density in front of the obstacle itself. Furthermore, the transition from high to low densities observed in figure \ref{fig:2dlane} is shifted towards the entrances. 
\begin{figure}
\centering
\includegraphics[width=0.7\textwidth]{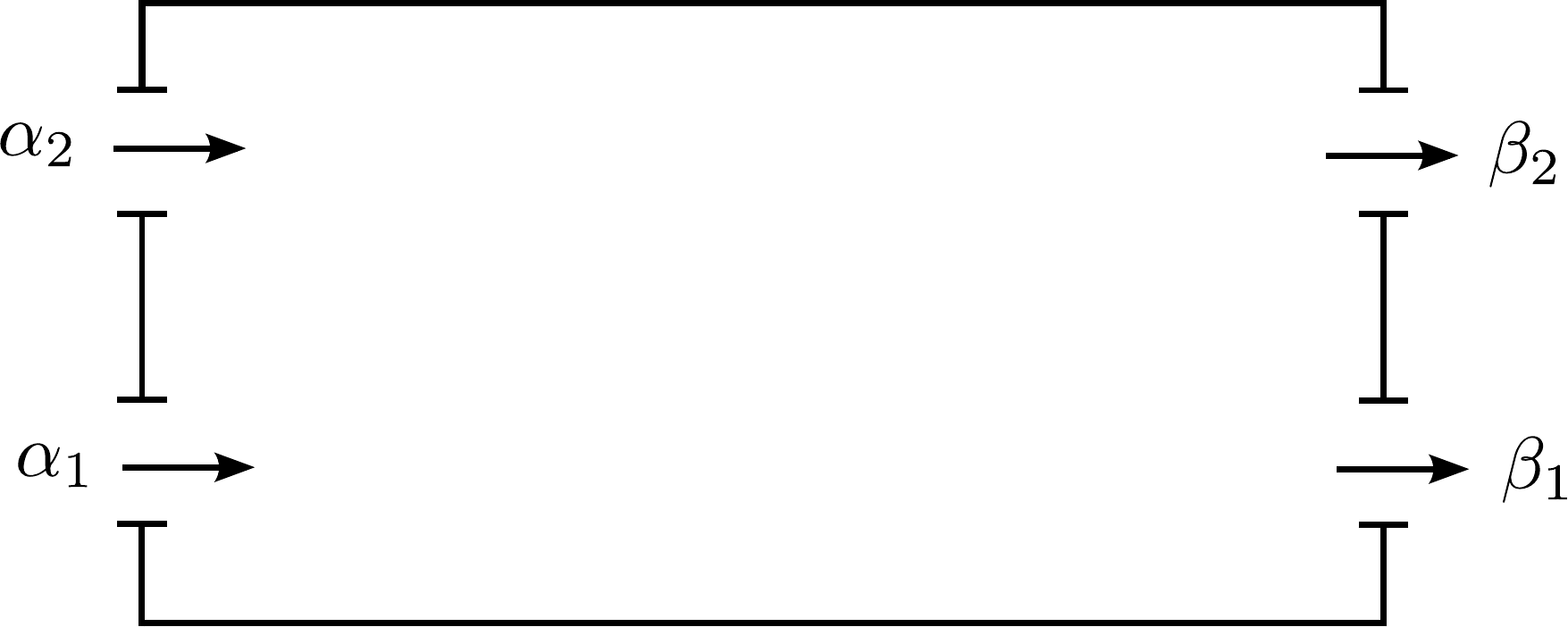}
\caption{Sketch of the geometry for the two-dimensional simulations. We consider a corridor of length $2$ and height $1$ and with two entrances and to exits, solated on the left and right boundary, respectively.}
\label{fig:sketch2d}
\end{figure}

\begin{figure}
\centering
\includegraphics[width=0.90\textwidth]{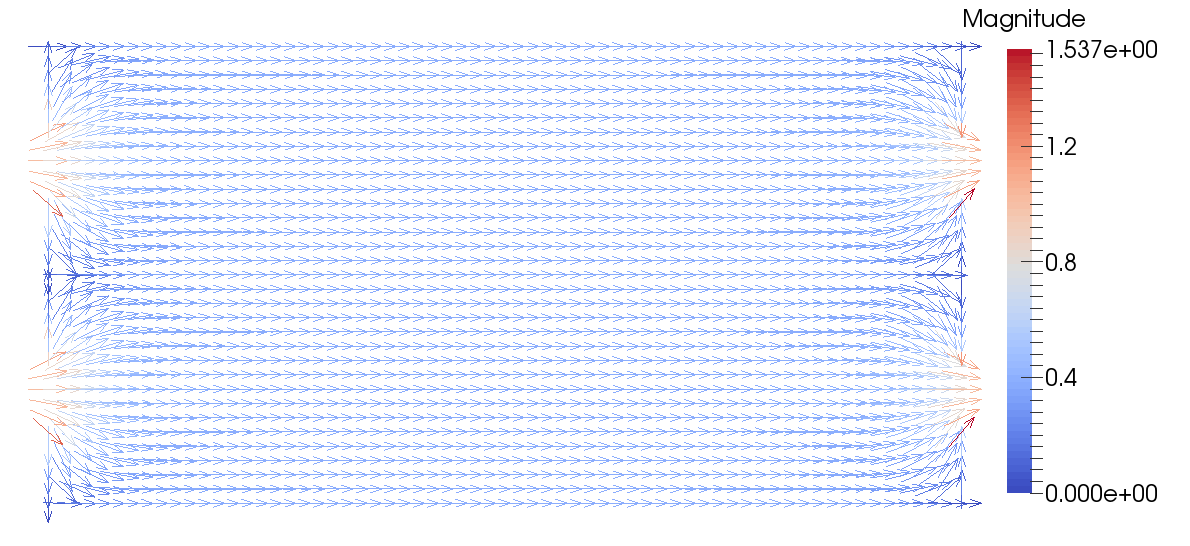}
\caption{The vector field $\nabla V_m$. The non homogeneous Neumann boundary conditions yield a velocity field that transports density away from the entrances and towards the exits thus facilitating the tranport.}
\label{fig:nablau}
\end{figure}

\begin{figure} 
\centering
\includegraphics[width=0.9\textwidth]{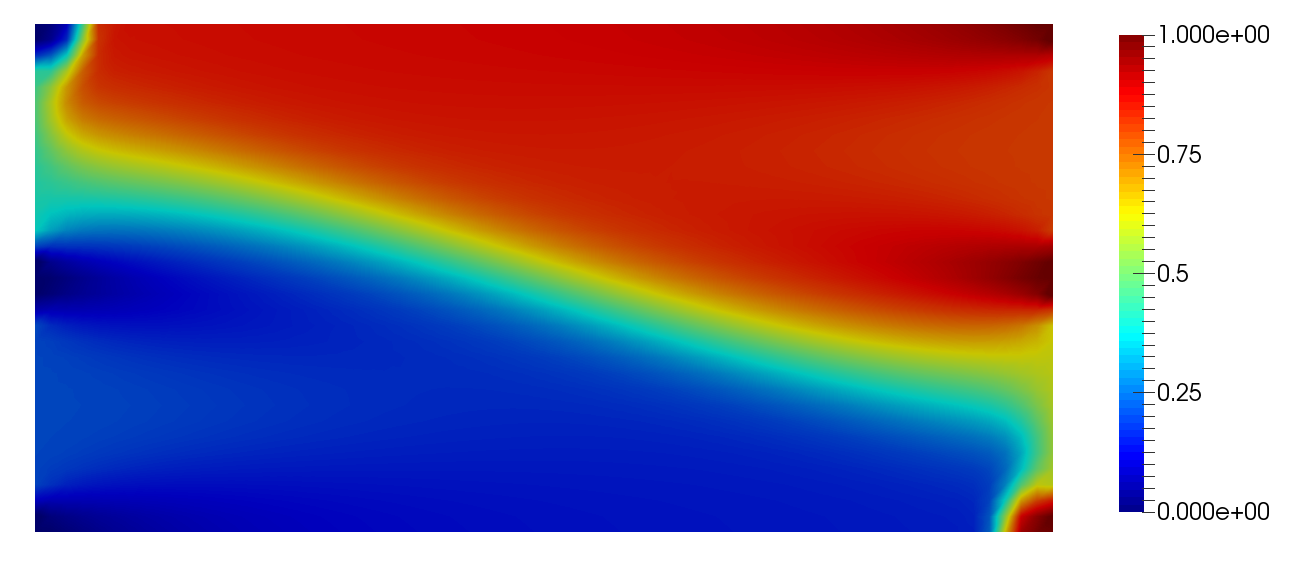}
\includegraphics[width=0.9\textwidth]{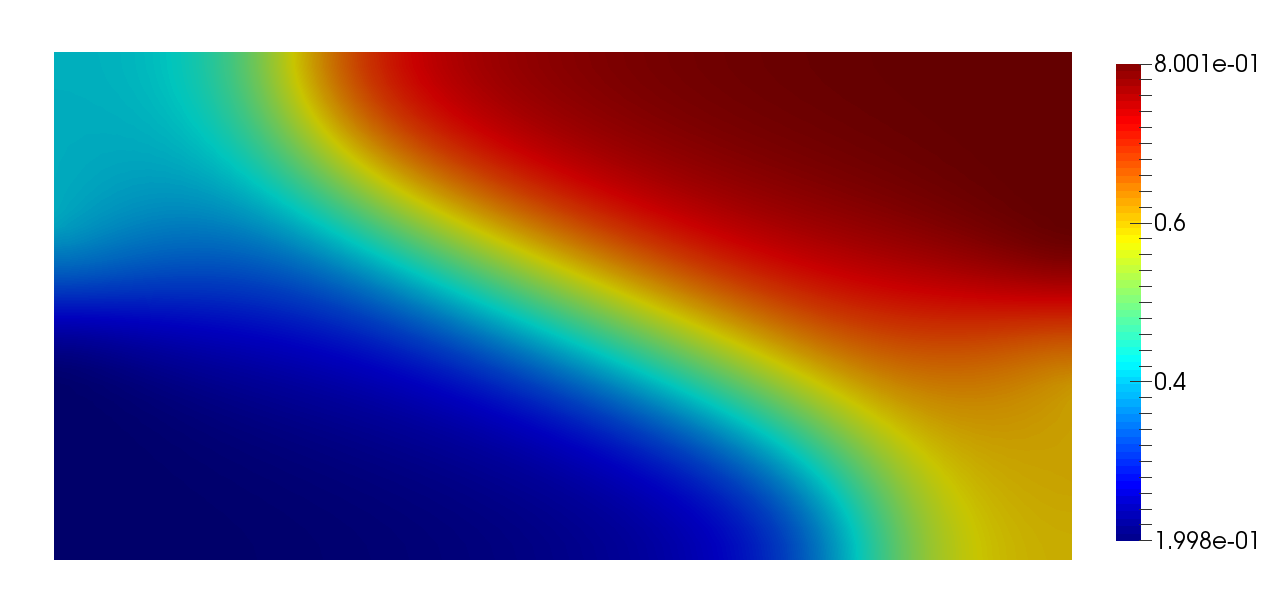}
 \caption{Simulation with rates $\alpha_1 = 0.2$, $\alpha_2 = 0.4$, $\beta_1 = 0.4$ and $\beta_2 = 0.2$. Above the results for $V=V_l$ are shown and even though the velocity field points in $x$-direction only, density is transported towards the larger exit. For $V=V_m$ (below) one clearly sees that the maximum principle is satisfied while still some density is transported to the lower exit. }
\label{fig:2dlane}
\end{figure}

\begin{figure} 
\centering
\includegraphics[width=0.9\textwidth]{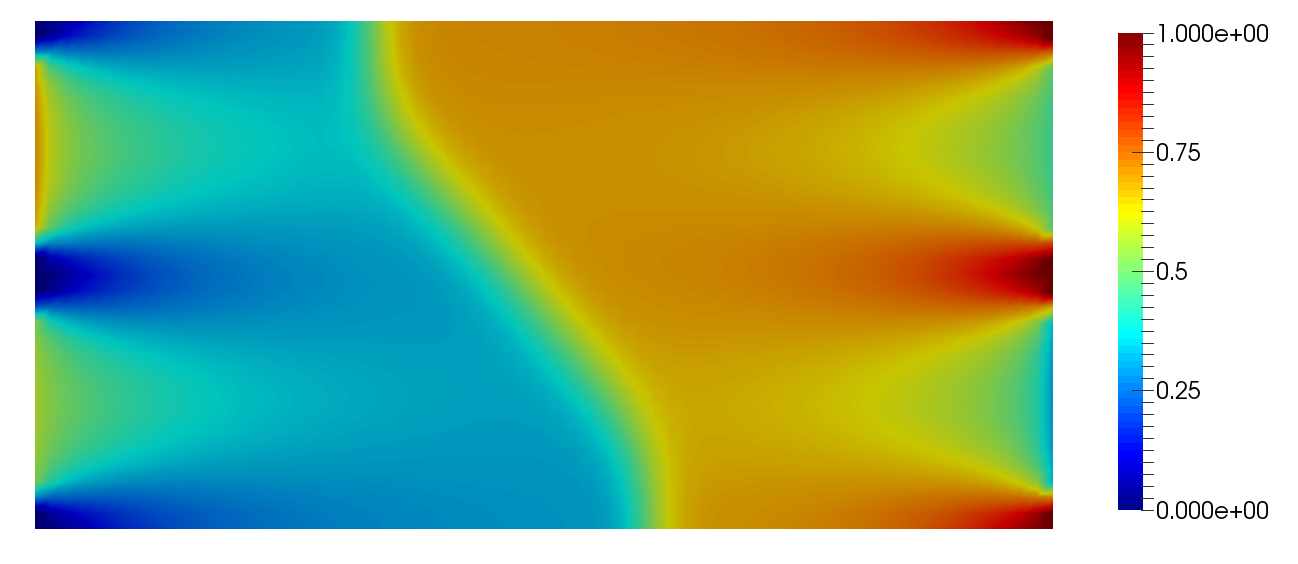}
\caption{Simulation with $V=V_l$ and rates $\alpha_1 = 0.6$, $\alpha_2 = 0.9$, $\beta_1 = 0.9$ and $\beta_2 = 0.6$, i.e. in the regime of maximal flow.}
\label{fig:2dmax}
\end{figure}

\begin{figure} 
\centering
\includegraphics[width=0.9\textwidth]{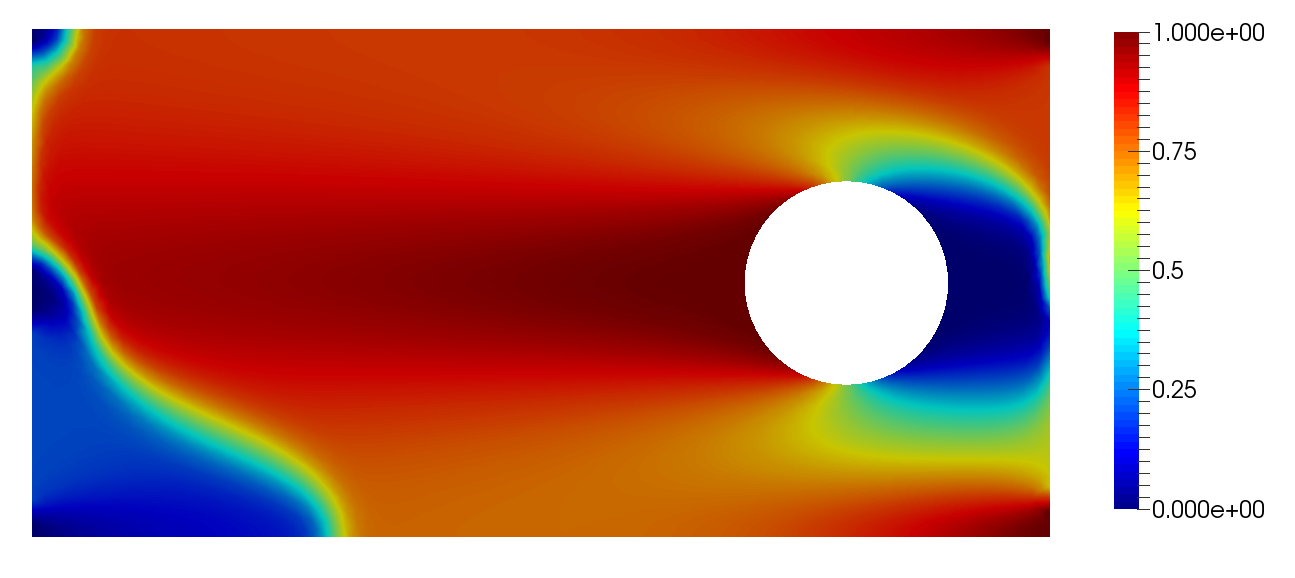}
 \caption{Introducing an obstacle dramatically decreases the density in front of the two exists, however at the cost of a slightly increased density in front of the obstacle. The rates are $\alpha_1 = 0.2$, $\alpha_2 = 0.4$, $\beta_1 = 0.4$ and $\beta_2 = 0.2$.}
\label{fig:2dhole}
\end{figure}

\section{Summary \& Outlook}
In this paper we analyzed a model for crowded transport with a single active species. We started by giving some details about the modelling then proceeding with two existence proofs in the stationary case. Next we analysed the flow characteristic of our model in the case of small diffusion. In one space dimension, we were able to recover three different phases that were already observed in the stochastic model \cite{Wood2009}. Further investigation showed however that the continous model can produce fluxes that exceed the value $j=1/4$ which do not occur on the discrete level. We concluded by presenting some numerical examples in one and two spatial dimensions.\\
Our analysis and especially the numerical examples in two spatial dimensions suggest that interesting phenomena can occur when dealing with more than one active species. Since each species has its own in- and outflow rate, it is not clear whether one would again observe different phases, clearly separated by certain values of these parameters. Also, to prove existence in the case $M>1$ becomes much more involved. Regarding the numerical discretization, a scheme that uses the reformulated problem in entropy variables might be an alternative to the direct approach used here.

\section*{Acknowledgements}
MB acknowledges support by ERC via Grant EU FP 7 - ERC Consolidator Grant 615216 LifeInverse. The work of JFP was supported by DFG via Grant 1073/1-1, by the Daimler and Benz Stiftung via Post-Doc Stipend 32-09/12 and by the German Academic Exchange Service via PPP grant No. 56052884. The authors would like to thank M.-T. Wolfram for suggesting the numerical example with obstacle.

\bibliographystyle{plain}
\bibliography{flow}

\def\cprime{$'$}
\begin{thebibliography}{10}

\bibitem{AGS}
L.~Ambrosio, N.~Gigli, and G.~Savar{\'e}.
\newblock {\em Gradient flows in metric spaces and in the space of probability
  measures}.
\newblock Lectures in Mathematics ETH Z\"urich. Birkh\"auser Verlag, Basel,
  second edition, 2008.

\bibitem{Brauer1975}
F.~Brauer and D.~A. S{\'{a}}nchez.
\newblock Constant rate population harvesting: Equilibrium and stability.
\newblock {\em Theoretical Population Biology}, 8(1):12 -- 30, 1975.

\bibitem{Burger2010}
M.~Burger, M.~DiFrancesco, J.-F. Pietschmann, and B.~Schlake.
\newblock Nonlinear cross-diffusion with size exclusion.
\newblock {\em SIAM J. Math. Anal.}, 42:2842--2871, 2010.

\bibitem{Burger2012}
M.~Burger, B.~Schlake, and M.-T. Wolfram.
\newblock Nonlinear {P}oisson-{N}ernst-{P}lanck equations for ion flux through
  confined geometries.
\newblock {\em Nonlinearity}, 25(4):961--990, 2012.

\bibitem{carrillo2001entropy}
JA~Carrillo, Ansgar J{\"u}ngel, PA~Markowich, Giuseppe Toscani, and Andreas
  Unterreiter.
\newblock Entropy dissipation methods for degenerate parabolicproblems and
  generalized sobolev inequalities.
\newblock {\em Monatshefte f{\"u}r Mathematik}, 133(1):1--82, 2001.

\bibitem{carrillo2010nonlinear}
Jos{\'e}~Antonio Carrillo, Stefano Lisini, Giuseppe Savar{\'e}, and Dejan
  Slep{\v{c}}ev.
\newblock Nonlinear mobility continuity equations and generalized displacement
  convexity.
\newblock {\em Journal of Functional Analysis}, 258(4):1273--1309, 2010.

\bibitem{chou2011non}
T~Chou, K~Mallick, and RKP Zia.
\newblock Non-equilibrium statistical mechanics: from a paradigmatic model to
  biological transport.
\newblock {\em Reports on progress in physics}, 74(11):116601, 2011.

\bibitem{ciandrini2014stepping}
Luca Ciandrini, M~Carmen Romano, and Andrea Parmeggiani.
\newblock Stepping and crowding of molecular motors: statistical kinetics from
  an exclusion process perspective.
\newblock {\em Biophysical journal}, 107(5):1176--1184, 2014.

\bibitem{cividini2013crossing}
J~Cividini, HJ~Hilhorst, and C~Appert-Rolland.
\newblock Crossing pedestrian traffic flows, the diagonal stripe pattern, and
  the chevron effect.
\newblock {\em Journal of Physics A: Mathematical and Theoretical},
  46(34):345002, 2013.

\bibitem{Cooke1986}
K.~L. Cooke and M.~Witten.
\newblock One-dimensional linear and logistic harvesting models.
\newblock {\em Mathematical Modelling}, 7(2–3):301 -- 340, 1986.

\bibitem{DiPietro2012}
D.~A. Di~Pietro and A.~Ern.
\newblock {\em Mathematical aspects of discontinuous {G}alerkin methods},
  volume~69 of {\em Math\'ematiques \& Applications (Berlin) [Mathematics \&
  Applications]}.
\newblock Springer, Heidelberg, 2012.

\bibitem{Dreyer2014}
W.~Dreyer, C.~Guhlke, and M.~Landstorfer.
\newblock A mixture theory of electrolytes containing solvation effects.
\newblock {\em Electrochemistry Communications}, 43(0):75 -- 78, 2014.

\bibitem{Dreyer2013}
Wolfgang Dreyer, Clemens Guhlke, and R\"udiger M\"uller.
\newblock Overcoming the shortcomings of the nernst-planck model.
\newblock {\em Phys. Chem. Chem. Phys.}, 15:7075--7086, 2013.

\bibitem{dyson2014importance}
Louise Dyson and Ruth~E Baker.
\newblock The importance of volume exclusion in modelling cellular migration.
\newblock {\em Journal of mathematical biology}, pages 1--21, 2014.

\bibitem{eisenberg2013steric}
Bob Eisenberg, Tzyy-Leng Horng, Tai-Chia Lin, and Chun Liu.
\newblock Steric pnp (poisson-nernst-planck): Ions in channels.
\newblock {\em Biophysical Journal}, 104(2):509a, 2013.

\bibitem{erban2007reactive}
Radek Erban and S~Jonathan Chapman.
\newblock Reactive boundary conditions for stochastic simulations of
  reaction--diffusion processes.
\newblock {\em Physical Biology}, 4(1):16, 2007.

\bibitem{Evans98}
L.~C. Evans.
\newblock {\em Partial Differential Equations}, volume~19 of {\em Graduate
  Studies in Mathematics}.
\newblock American Mathematical Society, 1998.

\bibitem{giacomin}
J.~L.Lebowitz G.Giacomin.
\newblock Phase segregation dynamics in particle systems with long range
  interactions i: macroscopic limits.
\newblock {\em J. Statist. Phys.}, 87:37--61, 1997.

\bibitem{Grisvard1985}
P.~Grisvard.
\newblock {\em Elliptic problems in nonsmooth domains}, volume~24 of {\em
  Monographs and Studies in Mathematics}.
\newblock Pitman (Advanced Publishing Program), Boston, MA, 1985.

\bibitem{karlsen}
Helge Holden, Kenneth~H Karlsen, and Nils~H Risebro.
\newblock On uniqueness and existence of entropy solutions of weakly coupled
  systems of nonlinear degenerate parabolic equations.
\newblock {\em Electronic Journal of Differential Equations}, 2003(46):1--31,
  2003.

\bibitem{horng2012pnp}
Tzyy-Leng Horng, Tai-Chia Lin, Chun Liu, and Bob Eisenberg.
\newblock Pnp equations with steric effects: a model of ion flow through
  channels.
\newblock {\em The Journal of Physical Chemistry B}, 116(37):11422--11441,
  2012.

\bibitem{jelic2012properties}
Asja Jeli{\'c}, C{\'e}cile Appert-Rolland, Samuel Lemercier, and Julien
  Pettr{\'e}.
\newblock Properties of pedestrians walking in line: Fundamental diagrams.
\newblock {\em Physical Review E}, 85(3):036111, 2012.

\bibitem{Juengel2014}
A.~{J{\"u}ngel}.
\newblock {The boundedness-by-entropy principle for cross-diffusion systems}.
\newblock {\em ArXiv e-prints}, 2014.

\bibitem{stelzer}
A.~J{\"u}ngel and I.~V. Stelzer.
\newblock Entropy structure of a cross-diffusion tumor-growth model.
\newblock {\em Math. Models Methods Appl. Sci.}, 22(7):1250009, 26, 2012.

\bibitem{painterhillen}
T.Hillen K.Painter.
\newblock Volume-filling and quorum sensing in models for chemosensitive
  movement.
\newblock {\em Canadian Applied Mathematics Quarterly}, 10:280--301, 2003.

\bibitem{Ladyzhenskaya1968}
O.~A. Ladyzhenskaya and N.~N. Ural{\cprime}tseva.
\newblock {\em Linear and quasilinear elliptic equations}.
\newblock Translated from the Russian by Scripta Technica, Inc. Translation
  editor: Leon Ehrenpreis. Academic Press, New York-London, 1968.

\bibitem{leduc2012molecular}
C{\'e}cile Leduc, Kathrin Padberg-Gehle, Vladim{\'\i}r Varga, Dirk Helbing,
  Stefan Diez, and Jonathon Howard.
\newblock Molecular crowding creates traffic jams of kinesin motors on
  microtubules.
\newblock {\em Proceedings of the National Academy of Sciences},
  109(16):6100--6105, 2012.

\bibitem{liero2013gradient}
Matthias Liero and Alexander Mielke.
\newblock Gradient structures and geodesic convexity for reaction--diffusion
  systems.
\newblock {\em Philosophical Transactions of the Royal Society A: Mathematical,
  Physical and Engineering Sciences}, 371(2005):20120346, 2013.

\bibitem{Logg2012}
A.~Logg, K.-A. Mardal, and G.~N. Wells, editors.
\newblock {\em Automated Solution of Differential Equations by the Finite
  Element Method}, volume~84 of {\em Lecture Notes in Computational Science and
  Engineering}.
\newblock Springer, 2012.

\bibitem{Logg2010}
A.~Logg and G.~N. Wells.
\newblock Dolfin: Automated finite element computing.
\newblock {\em ACM Trans. Math. Softw.}, 37(2):20:1--20:28, 2010.

\bibitem{simpsonmulti}
B.Hughes M.Simpson, K.Landman.
\newblock Multi-species simple exclusion processes.
\newblock {\em Physica A}, 388:399--406, 2009.

\bibitem{penington2011building}
Catherine~J Penington, Barry~D Hughes, and Kerry~A Landman.
\newblock Building macroscale models from microscale probabilistic models: a
  general probabilistic approach for nonlinear diffusion and multispecies
  phenomena.
\newblock {\em Physical Review E}, 84(4):041120, 2011.

\bibitem{plank2012models}
Michael~J Plank and Matthew~J Simpson.
\newblock Models of collective cell behaviour with crowding effects: comparing
  lattice-based and lattice-free approaches.
\newblock {\em Journal of The Royal Society Interface}, 9(76):2983--2996, 2012.

\bibitem{reese2011crowding}
Louis Reese, Anna Melbinger, and Erwin Frey.
\newblock Crowding of molecular motors determines microtubule depolymerization.
\newblock {\em Biophysical journal}, 101(9):2190--2200, 2011.

\bibitem{schlakepietschmann}
B.~Schlake and J.-F. Pietschmann.
\newblock Lane formation in a microscopic model and the corresponding partial
  differential equation.
\newblock 2003.
\newblock Proceedings of the 1st IEEE Workshop on Modeling, Simulation and
  Visual Analysis of Large Crowds, Barcelona.

\bibitem{simpson2011models}
Matthew~J Simpson, Ruth~E Baker, and Scott~W McCue.
\newblock Models of collective cell spreading with variable cell aspect ratio:
  a motivation for degenerate diffusion models.
\newblock {\em Physical Review E}, 83(2):021901, 2011.

\bibitem{jacksonbyrne}
H.Byrne T.Jackson.
\newblock A mechanical model of tumor encapsulation and transcapsular spread.
\newblock {\em Math. Biosci.}, pages 307--328, 2002.

\bibitem{Wood2009}
A.~J. Wood.
\newblock A totally asymmetric exclusion process with stochastically mediated
  entrance and exit.
\newblock {\em J. Phys. A: Math. Theor.}, 42:445002, 2009.

\end{thebibliography}

\section*{Appendix}
In this appendix we will detail the calculation of explicit solutions to the one-dimensional equation \eqref{uniflow1deq}. Since the flux $j$ is constant in 1D, we can integrate this equation to obtain the ordinary differential equation
\begin{align}\label{eq:integrated}
 -\partial_x\rho + \rho(1-\rho) = j
\end{align}
supplemented with the boundary conditions $j= \alpha(1-\rho(0))$ and $j=\beta \rho(1)$. We shall separately discuss the cases of constant density, maximal flux and the general case $j < 1/4$:
\begin{enumerate}
 \item $\rho = const$: Working with \eqref{eq:integrated}, the problem reduces to an overdetermined algebraic system of equations, namely
 \begin{align*}
  j &= \alpha ( 1 - \rho ),\\
  j &= \beta \rho,\\
  j &= \rho(1-\rho).
 \end{align*}
This is solvable if and only if $\alpha + \beta = 1$ and we obtain $\rho = \alpha = 1-\beta$ and $j=\alpha(1-\alpha)=\beta(1-\beta)$. In particular, the maximal flux $j = 1/4$ is achieved for $\alpha=\beta=1/2$, only.
 
\item $j = 1/4$: Note that \eqref{eq:integrated} with $j=1/4$ is also known as ``logistic equation with harvesting'' in the context of population dynamics, cf. \cite{Brauer1975,Cooke1986}.  In this case, \eqref{eq:integrated} becomes 
\begin{align*}
-\eps\partial_x \rho - \left(\rho - \frac{1}{2}\right)^2 = 0,
\end{align*}
which yields
\begin{align}\label{eq:sol_explicit_14}
\rho(x) = \frac{1}{2} + \frac{\eps}{x+c},
\end{align}
with the constant $c$ to be determined by the boundary conditions. We have
\begin{align*}
\frac{1}{4} &= \alpha(1-\rho(0)) = \alpha\left(\frac12 - \frac{\eps}{c_1}\right)\quad\Rightarrow c_1 = \frac{4\alpha\epsilon}{2\alpha-1},\\
\frac{1}{4} &= \beta\rho(1) = \beta\left(\frac12 + \frac{\eps}{1+c_2}\right)\quad\Rightarrow c_2 = \frac{4\beta\epsilon}{2\beta-1}-1.
\end{align*}
In order to obtain a single, continous solution we have to ensure the two conditions
\begin{align*}
 c_1 = c_2\text{ and either } c_1 = c_2 > 0 \text{ or } c_1 = c_2 < -1.
\end{align*}
Very elementary but rather tedious calculations show that this amounts to the following two conditions on $\alpha,\,\beta$ and $\eps$
\begin{align*}
 \frac{1}{2}\frac{1+2\eps}{4\eps+1} < \alpha &< \frac{1}{2}\quad\text{ and }\quad \beta = \frac{1}{2}\frac{4\alpha\eps+2\alpha-1}{8\alpha\eps+2\alpha-2\eps-1}\quad\text{for }c>0,\\
 \frac{1}{2}\frac{1+2\eps}{4\eps+1} < \beta &< \frac{1}{2}\quad\text{ and }\quad \alpha = \frac{1}{2}\frac{4\beta\eps+2\alpha-1}{8\beta\eps+2\beta-2\eps-1}\quad\text{for }c<-1,
\end{align*}
with $c:=c_1=c_2$.
As already discussed in section \ref{sec:explicit}, this in interesting since solution with maximal flux occur for values of $\alpha$, $\beta < 1/2$ which is in contrast to the discrete model, \cite{Wood2009}. See also \ref{sec:num1d} for some numerical examples that confirm this observation.
\item $j \neq 1/4$: In this case the explicit solutions is given by 
\begin{align}
\rho(x)  = \frac{1}{2} - \frac{\sqrt{4j-1}}{2}\tan\left(\frac{1}{2}\frac{\sqrt{4j-1}(x+c)}{\eps}\right).
\end{align}
The boundary conditions reduce to the following nonlinear algebraic system
\begin{align*}
j &= \alpha \left(\frac12+\frac{\sqrt{4j-1}}{2}\tan\left(\frac{1}{2}\frac{\sqrt{4j-1}(1+c)}{\eps}\right)\right),\\
j &= \beta\left(\frac12 - \frac{\sqrt{4j-1}}{2}\tan\left(\frac{1}{2}\frac{\sqrt{4j-1}c}{\eps}\right)\right).
\end{align*}
Solving these two conditions for the constant $c$, we finally end up with a non-linear equation determining $j$ given by
\begin{align*}
 \arctan\left(\frac{2j-\alpha}{\alpha\sqrt{4j-1}}\right) = \frac{\sqrt{4j-1}}{2\eps} + \arctan\left(\frac{2j-\beta}{\beta\sqrt{4j-1}}\right).
\end{align*}
This equation can be solved for example by applying Newton's method.
\end{enumerate}
Note that using these calculation, one can basically calculate the solution to \eqref{uniflow1deq} (with $u = 1$) explicitly for $j=1/4$ or $\rho=const$ and for other values of $j$ by solving a system of non-linear equations.



\end{document}